\newtheorem{proposition}{Proposition}[section]
\newtheorem{theorem}[proposition]{Theorem}
\newtheorem{lemma}[proposition]{Lemma}
\newtheorem{corollary}[proposition]{Corollary}
\newenvironment{proofof}[1]{\smallskip\noindent{\textbf{Proof~of~#1.}}%
  \hspace{1pt}}{\hspace{-5pt}{\nobreak\quad\nobreak\hfill\nobreak%
    $\square$\vspace{2pt}\par}\smallskip\goodbreak}
\numberwithin{equation}{section}
\renewcommand{\phi}{\varphi}
\renewcommand{\theta}{\vartheta}
\renewcommand{\epsilon}{\varepsilon}
\renewcommand{\L}[1]{\mathbf{L^#1}}
\renewcommand{\div}{\mathinner{\mathop{\rm div}}}
\newcommand{\grad}{\mathinner{\mathop{\rm grad}}}
\newcommand{\C}[1]{\mathbf{C^{#1}}}
\newcommand{\modulo}[1]{{\left|#1\right|}}
\newcommand{\norma}[1]{{\left\|#1\right\|}}
\newcommand{\reali}{{\mathbb{R}}}
\newcommand{\tv}{\mathop\mathrm{TV}}
\renewcommand{\O}{\mathinner{\mathcal{O}(1)}}
\newcommand{\spt}{\mathop\mathrm{spt}}
\newcommand{\Id}{\mathop{\mathbf{Id}}}
\newcommand{\caratt}[1]{\chi_{\strut#1}}
\newcommand{\wsto}{\mathinner{\stackrel{\star}{\rightharpoonup}}}
\newcommand{\Caption}[1]{\caption{\narrower{\narrower{\small#1}}}}
\renewcommand{\d}[1]{\mathinner{\mathrm{d}{#1}}}
\begin{document}

\title{A Game Theoretic Approach to\\ Hyperbolic Consensus Problems}

\author{Rinaldo M.~Colombo$^1$ \and Mauro Garavello$^2$}

\footnotetext[1]{INdAM Unit, University of Brescia. \texttt{rinaldo.colombo@unibs.it}}

\footnotetext[2]{Department of Mathematics and its Applications,
  University of Milano Bicocca. \par \texttt{mauro.garavello@unimib.it}}

\maketitle

\begin{abstract}

  \noindent We introduce the use of conservation laws to develop
  strategies in multi-player consensus games. First, basic well
  posedness results provide a reliable analytic setting. Then, a
  general non anticipative strategy is proposed through its rigorous
  analytic definitions and then tested by means of numerical
  integrations.

  \medskip

  \noindent\textbf{Keywords:} Hyperbolic Consensus Model, Multi-agent Consensus
  Control, Conservation Laws

  \medskip

  \noindent\textbf{2010 MSC:} 91A23, 35L65, 70F45
\end{abstract}

\section{Introduction}
\label{sec:I}

A group of \emph{``leaders''}, or broadcasting agents, aims at getting
the consensus of a variety of individuals. We identify each
individual's opinion with a \emph{``position''} $x$ moving in
$\reali^N$. It is then natural to describe the leaders through their
\emph{''positions''} $P_1, P_2, \ldots, P_k$, also in $\reali^N$. We
are thus lead to the general system of ordinary differential equation
\begin{displaymath}
  \left\{
    \begin{array}{l}
      \dot x
      =
      v\left(t, x, P_1 (t), \ldots, P_k (t)\right)
      \\
      x (0) = \bar x
    \end{array}
  \right.
\end{displaymath}
$t$ being time. The vector field $v$ describes the interaction among
individuals and agents, which can be attractive, repulsive, or a
mixture of the two. Clearly, no linearity assumption can be reasonably
required on $v$, otherwise the interaction between an agent and the
individuals increases as the distance between them increases.

The task of the agent $P_i$, be it attractive or repulsive, is to
maximize its own consensus, i.e., to drive the maximal amount of
individuals (or their opinions) as near as possible to its own target
region $\mathcal{T}_i$ at time $T$, for a suitable non empty region
$\mathcal{T}_i \subset \reali^N$. The time horizon $T$ is finite and
the same for all agents.

A high number of individuals, as well as uncertainties in their
initial positions or specific movements, suggests to describe the
dynamics underneath the present problem through the continuity
equation
\begin{equation}
  \label{eq:25}
  \partial_t \rho
  +
  \div_x \left(\rho \; v\left(t, x, P_1 (t), \ldots, P_k (t)\right)\right)
  =
  0 \,,
\end{equation}
where the description of each individual is substituted by that of the
individuals' density distribution $\rho = \rho (t,x)$, while the goal
of the $i$--th leader is formalized through the minimization of the
quantity
\begin{equation}
  \label{eq:26}
  \mathcal{J}_i = \int_{\reali^N} \rho (T,x) \; d (x, \mathcal{T}_i) \, \d{x}
\end{equation}
where $d (x, \mathcal{T}_i) = \inf_{y \in \mathcal{T}_i} \norma{x-y}$
is the distance between the position $x$ and the target
$\mathcal{T}_i$.

Aim of this paper is to formalize the above setting, to provide basic
well posedness theorems and to initiate the search for
controls/strategies to tackle the above problem. Note that the case
$k=1$ of a single broadcasting agent leads to a control problem, while
the case $k>1$ of $k$ possibly competing agents fits into game theory.

As it is usual in control theory, rather than the agents' positions
$P_i$, it is preferable to use as controls/strategies the agents'
speeds $u_i$, with $u_i \in \reali^N$, subject to a boundedness
constraint of the type $\norma{u_i} \leq U$, for a positive
$U$. Introducing the initial individuals' distribution $\bar \rho$ and
agents' positions $\bar P_1, \ldots, \bar P_k$, the dynamics is then
described by the Cauchy Problem
\begin{equation}
  \label{eq:1}
  \left\{
    \begin{array}{l}
      \partial_t \rho
      +
      \div_x \left(
      \rho \;
      v\!\left(t, x, P_1 (t) , \ldots, P_k (t)\right)
      \right)
      =
      0
      \\
      \rho (0,x) = \bar\rho (x)
    \end{array}
  \right.
  \; \mbox{ where } \;
  \left\{
    \begin{array}{l}
      \dot P_i = u_i (t)
      \\
      P_i (0) = \bar P_i
    \end{array}
  \right.
  \quad i=1, \ldots, k
\end{equation}
where the cost functionals $\mathcal J_i$ are as
in~\eqref{eq:26}. This structure is amenable to the introduction of
several control/game theoretic concepts, from optimal controls to Nash
equilibria, and to the search for their existence. Below we initiate
this study providing the basic analytic framework and tackling the
problem of control/strategies to minimize costs of the
type~\eqref{eq:26}. Various numerical integrations illustrate the
rigorous results obtained.

Note that the present setting, restricted to the case $N=2$, allows
also to describe the individual--continuum interactions considered,
for instance, in~\cite{ColomboMercier}, see
also~\cite{ColomboGaravelloLecureux2012, ColomboHertyMercier},
and~\cite{ColomboPogodaev2012} where an entirely different analytic
framework is exploited. From this point of view, the present work is
related to the vast literature on crowd and swarm dynamics, see the
recent works~\cite{MR3110059, MR3285315, MR3431278, 4118472,
  PiccoliPouradierScharf, MR3432849, MR3285316} or the
review~\cite{BellomoDogbe} and the references therein.

Concerning our choice of the conservation law~\eqref{eq:25}, we stress
that typical of equations of this kind is the finite speed both of
propagation of information and of the support of the density. This is
in contrast with the typical situation in standard differential games
ruled by parabolic equations.

In the next section we first provide the basic notation and
definitions, then we provide basic well posedness results and
introduce a reasonable non anticipative strategy. Section~\ref{sec:NI}
is devoted to sample applications, while all analytic proofs are
deferred to Section~\ref{sec:TD}.

\section{Analytic Results}
\label{sec:AR}

Throughout, the positive time $T$ and the maximal speed $U$ are
fixed. For $a,b \in \reali$, denote
$\langle a, b \rangle = [\min\{a, b\}, \max \{a, b\}]$.  By
$\mathcal{L}^N$ we mean the Lebesgue measure in $\reali^N$. The open,
respectively closed, ball in $\reali^m$ centered at $u$ with radius
$U$ is $B_{\reali^m}(u,U)$, respectively
$\overline{B_{\reali^m}(u,U)}$; when the space is clear, we shorten to
$B(u,U)$ or $\overline{B(u,U)}$. In $\reali$, $\modulo{\,\cdot\,}$ is
the absolute value, while $\norma{\,\cdot\,}$ is the Euclidean norm in
$\reali^N$. The norm in the functional space $\mathcal{F}$ is denoted
$\norma{\,\cdot\,}_{\mathcal{F}}$. The space $\C0 (A; \reali^n)$ of
the $\reali^n$-valued functions defined on the subset $A$ of
$\reali^m$ is equipped with the norm
$\norma{f}_{\C0 (A;\reali^n)} = \sup_{x \in A} \norma{f (x)}$.
Throughout, $\tv (\,\cdot\,)$ stands for the total variation,
see~\cite[Chapter~5]{EvansGariepy}. For a measurable function $\rho$
defined on $\reali^N$, $\spt \rho$ is its support,
see~\cite[Proposition~4.17]{brezis}.

Introduce $P \equiv (P_1, \ldots, P_k)$, so that $P \in \reali^m$ with
$m = k\,N$, and rewrite~\eqref{eq:1} as
\begin{equation}
  \label{eq:4}
  \left\{
    \begin{array}{l}
      \partial_t \rho
      +
      \div_x \left(
      \rho \;
      v\!\left(t, x, P (t)\right)
      \right)
      =
      0
      \\
      \rho (0,x) = \bar\rho (x)
    \end{array}
  \right.
  \quad \mbox{ where } \quad
  \left\{
    \begin{array}{l}
      \dot P = u (t)
      \\
      P (0) = \bar P \,.
    \end{array}
  \right.
\end{equation}
Below, recurrent assumptions on the function $v$ in~\eqref{eq:4} are
the following:

\begin{description}
\item[\emph{(v0)}:] The vector field
  $v \in \C0 ([0,T] \times \reali^N \times \reali^m; \reali^N)$ is
  such that for all $t \in [0,T]$ and $P \in \reali^m$, the map
  $x \to v (t, x, P)$ is in $\C{0,1} (\reali^N; \reali^N)$.
\item[\emph{(v1)}:] \textbf{\emph{(v0)}} holds and moreover
  \vspace{-0.5\baselineskip}
  \begin{itemize}
  \item for all $t \in [0,T]$ and $P \in \reali^m$, the map
    $x \to v (t, x, P)$ is in $\C{1,1} (\reali^N; \reali^N)$;
  \item for all $t \in [0,T]$ and $x \in \reali^N$, the map
    $P \to v (t, x, P)$ is in $\C{0,1} (\reali^m; \reali^N)$.
  \end{itemize}
\end{description}

\noindent We now prove well posedness and basic estimates
for~\eqref{eq:1} or, equivalently, (\ref{eq:4}).

\begin{proposition}
  \label{prop:0}
  Fix positive $T$ and $U$. Let $v$ satisfy~\textbf{(v0)}. For any
  $\bar\rho \in \L1 (\reali^N; \reali)$, $\bar P \in \reali^m$ and
  $u \in \L\infty ([0,T]; \overline{B_{\reali^m}(0,U)})$,
  problem~\eqref{eq:4} admits the unique solution
  \begin{displaymath}
    \rho (t,x)
    =
    \bar\rho\left(X (0;t,x)\right)
    \exp \left(
      -\int_0^t \div_x v\left(\tau, X (\tau; t,x), P (\tau)\right) \d\tau
    \right)
  \end{displaymath}
  where $t \to X (t; \bar t, \bar x)$ solves $\left\{
    \begin{array}{l@{}}
      \dot x = v\left(t,x,P (t)\right)
      \\
      x (\bar t) = \bar x
    \end{array}
  \right.$
  and $\displaystyle P (t) = \bar P + \int_0^t u (\tau)\d\tau$ for
  $t \in [0,T]$. Moreover, if $v$ satisfies~\textbf{(v1)} and
  $u_1,u_2 \in \L\infty ([0,T]; \overline{B_{\reali^m} (0,U)})$, then
  (with obvious notation) for all $t \in [0,T]$,
  \begin{eqnarray}
    \label{eq:29}
    \norma{X_1 (t; 0, \bar x) - X_2 (t; 0, \bar x)}
      & \leq
      & C \, t \, e^{C\, t} \; \norma{P_1 - P_2}_{\C0 ([0,t]; \reali^m)}
    \\
    \label{eq:34}
    \norma{\rho_1 (t) - \rho_2 (t)}_{\L1 (\reali^N; \reali)}
      & \leq
      & C \, \Bigl(
        \norma{\grad_x \bar\rho}_{\L\infty (\reali^N; \reali^N)} \;
        \mathcal{L}^N \! \left(B(\spt\bar\rho, C t e^{C t})\right)
    \\
    \nonumber
      &
      & \qquad\qquad
        +
        \norma{\bar\rho}_{\L1 (\reali^N; \reali)} \,
        (1 + C\, t)
        \Bigr)
        t \, e^{2Ct} \, \norma{P_1-P_2}_{\C0 ([0,t]; \reali^m)}
  \end{eqnarray}
  where $C$ is independent of the initial datum, more precisely:
  \begin{equation}
    \label{eq:21}
    C
    =
    \max
    \left\{
      \begin{array}{ll}
        \norma{v}_{\L\infty ([0,t]\times\reali^N \times \reali^m; \reali^N)} ,
        & \norma{D_x v}_{\L\infty ([0,t]\times\reali^N\times\reali^m; \reali^{N\times N})} ,
        \\
        \norma{D_P v}_{\L\infty ([0,t]\times\reali^N\times\reali^m; \reali^{N\times m})} ,
        & \norma{\grad_x \div_x v}_{\L\infty ([0,t]\times\reali^N\times\reali^m; \reali^N)}
      \end{array}
    \right\} \,.
  \end{equation}
\end{proposition}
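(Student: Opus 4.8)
The plan is to construct the solution explicitly by the method of characteristics and then to propagate the dependence on the control through Gr\"onwall-type estimates. For existence and uniqueness under \textbf{(v0)}: since $u\in\L\infty([0,T];\overline{B_{\reali^m}(0,U)})$, the formula $P(t)=\bar P+\int_0^t u(\tau)\,\d\tau$ is the unique Lipschitz solution of the $P$--equation, and then $x\mapsto v(t,x,P(t))$ is $\C{0,1}$ in $x$ with continuous time dependence, so Cauchy--Lipschitz yields the unique flow $X(t;\bar t,\bar x)$. I would verify that the stated $\rho$ solves~\eqref{eq:4} by checking that $t\mapsto\rho(t,X(t;0,\bar x))\exp(\int_0^t\div_x v\,\d\tau)$ is constant along characteristics, which is precisely the Liouville identity for the Jacobian of the flow; uniqueness is then immediate since the equation is linear and the characteristics fill $[0,T]\times\reali^N$.

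For~\eqref{eq:29}, write the two flows in integral form and subtract; adding and subtracting $v(\tau,X_2,P_1)$ and invoking the Lipschitz bounds of \textbf{(v1)} in $x$ and in $P$ (with constants $\le C$, through $\norma{D_x v}_\infty$ and $\norma{D_P v}_\infty$) gives $\norma{X_1(t)-X_2(t)}\le C\int_0^t\norma{X_1-X_2}\,\d\tau+Ct\,\norma{P_1-P_2}_{\C0([0,t];\reali^m)}$, and Gr\"onwall yields~\eqref{eq:29}. Running the same computation backward from $(t,x)$ bounds the intermediate and foot-point differences $\norma{X_1(\tau;t,x)-X_2(\tau;t,x)}$ by the same right--hand side; this is the ingredient the density estimate will use.

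For~\eqref{eq:34} I would substitute the explicit formula and split, with $Y_i=X_i(0;t,x)$ and $E_i$ the exponential factor, $\rho_1-\rho_2=[\bar\rho(Y_1)-\bar\rho(Y_2)]\,E_1+\bar\rho(Y_2)\,[E_1-E_2]$. The first summand is bounded pointwise by $\norma{\grad_x\bar\rho}_\infty\norma{Y_1-Y_2}\,E_1$; using the backward version of~\eqref{eq:29}, the bound $E_1\le e^{Ct}$, and the fact that the integrand is supported in the transported set $B(\spt\bar\rho,Cte^{Ct})$ (the flow displaces points by at most $Ct$), this produces the $\norma{\grad_x\bar\rho}_\infty\,\mathcal{L}^N(B(\spt\bar\rho,Cte^{Ct}))$ term. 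For the second summand I would use $\bar\rho(Y_2)=\rho_2\,E_2^{-1}$ to rewrite it as $\rho_2\,[\exp(A_2-A_1)-1]$, where $A_i=\int_0^t\div_x v(\tau,X_i(\tau;t,x),P_i)\,\d\tau$; bounding $\modulo{\exp(A_2-A_1)-1}\le\modulo{A_2-A_1}\,e^{2Ct}$ and integrating against $\rho_2$, whose mass equals $\norma{\bar\rho}_{\L1}$, produces the $\norma{\bar\rho}_{\L1}(1+Ct)$ term.

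The main obstacle is the estimate of $\modulo{A_2-A_1}$, which compares $\div_x v$ along two distinct characteristics carrying two distinct parameters. The base--point discrepancy is handled by $\norma{\grad_x\div_x v}_\infty\le C$ together with the backward-flow bound of the previous step, contributing the $Ct$ factor; the parameter discrepancy is handled by the Lipschitz dependence in $P$ granted by \textbf{(v1)}, contributing the remaining order--one term. Here one must keep the exponents bounded (whence the $e^{2Ct}$ factors) and check that the change of variables $x=X_2(t;0,y)$ used to pass to the mass $\norma{\bar\rho}_{\L1}$ is admissible, its Jacobian being exactly $E_2^{-1}$ by the Liouville identity. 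Collecting the two summands gives~\eqref{eq:34} with $C$ as in~\eqref{eq:21}.
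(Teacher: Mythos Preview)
Your proposal is correct and takes essentially the same approach as the paper, which packages the characteristic formula, the flow stability, and the $\L1$ density estimate into three preparatory lemmas and then specializes them with $V_i(t,x)=v\bigl(t,x,P_i(t)\bigr)$. The only cosmetic difference is that the paper splits $\rho_1-\rho_2$ into three pieces (separating the $P$- and $X$-dependence of the exponent already at the level of the exponential factors) whereas you split into two and then decompose $A_2-A_1$; the ingredients---Gr\"onwall for the flow, the inequality $\modulo{e^a-e^b}\le e^{\max\{a,b\}}\modulo{a-b}$, the support bound for the first summand, and the Liouville Jacobian for the change of variables in the second---are identical.
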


\noindent The proof is deferred to Section~\ref{sec:TD}. Here, the
term \emph{``solution''} means Kru\v zkov
solution~\cite[Definition~1]{kruzkov}, which is also a strong solution
as soon as $\bar\rho$ is smooth. A straightforward consequence of the
above Lemma is the following convergence result, which we state
without proof.

\begin{corollary}
  \label{cor:weak}
  Fix positive $T$ and $U$. Let $v$ be bounded and
  satisfy~\textbf{(v1)}, $\bar\rho \in \L1 (\reali^N; \reali)$ and
  $\bar P \in \reali^m$. If
  $u_n, u_* \in \L\infty ([0,T]; \overline{B_{\reali^m} (0,U)})$ are
  such that $u_n \wsto u_*$ in $\L\infty ([0,T]; \reali^m)$ as
  $n \to +\infty$, then, up to a subsequence,
  \begin{displaymath}
    P_n
    \to
    P_*
    \mbox{ in } \C0 ([0,T]; \reali^N)
    \quad \mbox{ and } \quad
    \begin{array}{r@{\;}c@{\;}l@{\mbox{ in }}l}
      \rho_n (t)
      & \to
      &  \rho_* (t)
      & \L1 (\reali^N; \reali) \mbox{ for all }t \in [0,T]\,,
      \\
      \rho_n
      & \to
      & \rho_*
      & \C0\left([0,T]; \L1 (\reali^N; \reali)\right) \,.
    \end{array}
  \end{displaymath}
  If $\bar\rho \in \C1 (\reali^N;\reali)$, then
  $\begin{array}{rcll} \grad_x \rho_n (t) & \to & \grad_x \rho_* (t) &
    \mbox{ in } \L1 (\reali^N; \reali) \mbox{ for all }t \in [0,T]
     \\
     \grad_x \rho_n & \to & \grad_x \rho_* & \mbox{ in }
                                             \C0\left([0,T]; \L1
                                             (\reali^N; \reali)\right)
   \end{array}$.
 \end{corollary}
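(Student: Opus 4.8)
The plan is to deduce everything from Proposition~\ref{prop:0}, treating first the positions $P_n$, then the densities $\rho_n$ for general $\bar\rho \in \L1$, and finally the gradients under the extra regularity $\bar\rho \in \C1$.

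First I would establish $P_n \to P_*$ in $\C0 ([0,T]; \reali^m)$. Testing the weak-$*$ convergence $u_n \wsto u_*$ against $\caratt{[0,t]}\, e_j \in \L1 ([0,T]; \reali^m)$, with $e_j$ the coordinate vectors, yields for every fixed $t$
\begin{displaymath}
  P_n (t) = \bar P + \int_0^t u_n (\tau) \d\tau \longrightarrow \bar P + \int_0^t u_* (\tau) \d\tau = P_* (t) \,.
\end{displaymath}
Since $\norma{u_n (\tau)} \leq U$ for a.e.\ $\tau$, each $P_n$ is $U$-Lipschitz, so the family $\{P_n\}$ is uniformly bounded and equicontinuous on $[0,T]$. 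By Ascoli--Arzelà a subsequence converges uniformly, and the pointwise limit forces this uniform limit to be $P_*$. Hence $P_n \to P_*$ in $\C0 ([0,T]; \reali^m)$ along that subsequence.

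Next I would prove $\rho_n \to \rho_*$ in $\C0 ([0,T]; \L1)$. If $\bar\rho \in \Cc1 (\reali^N; \reali)$, estimate~\eqref{eq:34} applied to the pair $(P_n, P_*)$ gives, for every $t \in [0,T]$,
\begin{displaymath}
  \norma{\rho_n (t) - \rho_* (t)}_{\L1 (\reali^N; \reali)} \leq K \, \norma{P_n - P_*}_{\C0 ([0,T]; \reali^m)} \,,
\end{displaymath}
where $K$ collects the prefactor in~\eqref{eq:34}, which is bounded uniformly in $t \in [0,T]$ because $\spt\bar\rho$ is compact and $C$ in~\eqref{eq:21} is finite; here I also use $\norma{P_n - P_*}_{\C0 ([0,t])} \leq \norma{P_n - P_*}_{\C0 ([0,T])}$. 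The right-hand side is independent of $t$ and vanishes by the previous step, giving convergence in $\C0 ([0,T]; \L1)$. For general $\bar\rho \in \L1$ I would argue by density. The representation formula in Proposition~\ref{prop:0} is linear in $\bar\rho$, and the change of variables $y = X (0;t,x)$, whose Jacobian equals the reciprocal of the exponential weight, shows that the solution operator $\bar\rho \mapsto \rho (t, \cdot)$ preserves the $\L1$ norm. Consequently, for fixed $u_n$ and any $\bar\rho^\epsilon \in \Cc1$,
\begin{displaymath}
  \norma{\rho_n (t) - \rho_n^\epsilon (t)}_{\L1 (\reali^N; \reali)} = \norma{\bar\rho - \bar\rho^\epsilon}_{\L1 (\reali^N; \reali)}
\end{displaymath}
uniformly in $n$ and $t$, and likewise for $\rho_*$. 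A standard $\epsilon/3$ argument, choosing $\bar\rho^\epsilon \to \bar\rho$ in $\L1$ and then $n$ large via the smooth case, closes this step; the pointwise-in-$t$ statement is a special case.

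Finally, for $\bar\rho \in \C1$ I would establish the convergence of the gradients. The cleanest route is a companion to~\eqref{eq:34} for $\grad_x \rho$, proved by the scheme underlying Proposition~\ref{prop:0}: differentiating the representation formula expresses $\grad_x \rho_n$ through $\grad \bar\rho$, the spatial Jacobian $D_x X_n$ of the flow, and the exponential weight together with its spatial gradient. The convergence $\grad_x \rho_n \to \grad_x \rho_*$ then follows once one controls $D_x X_n - D_x X_*$, which solves the variational equation $\frac{d}{dt} D_x X = D_x v (t, X, P)\, D_x X$ with $D_x X|_{t=0} = \Id$; under~\textbf{(v1)} the coefficient $D_x v$ is Lipschitz in $P$, so Gronwall yields $D_x X_n \to D_x X_*$ uniformly on the relevant compact set, driven by $\norma{P_n - P_*}_{\C0}$. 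An equivalent route differentiates the continuity equation: each $\partial_{x_j}\rho_n$ solves a continuity equation with source $-\div_x (\rho_n\, \partial_{x_j} v)$, so Duhamel's formula reduces the gradient convergence to the already established $\L1$ convergence of $\rho_n$ together with the flow bound~\eqref{eq:29}. I expect this last step --- the uniform control of the spatial derivatives $D_x X_n$ of the flow, which is absent from the statement of Proposition~\ref{prop:0} --- to be the main obstacle; everything else is a direct application of the estimates already in hand.
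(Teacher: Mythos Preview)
The paper does not actually prove this corollary: it is introduced as ``a straightforward consequence of the above Lemma'' and explicitly ``state[d] without proof''. So there is no proof in the paper to compare your proposal against.

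Your route is the natural one and is what the authors presumably have in mind. A couple of remarks. First, in Step~1 you can in fact dispense with the subsequence: once pointwise convergence $P_n (t) \to P_* (t)$ is known for every $t$, Ascoli--Arzelà shows that every subsequence has a uniformly convergent further subsequence, and the pointwise limit identifies it as $P_*$; hence the whole sequence converges in $\C0$. Second, in the gradient step you write that ``under~\textbf{(v1)} the coefficient $D_x v$ is Lipschitz in $P$''. Note that~\textbf{(v1)} only guarantees $x \mapsto v (t,x,P)$ is $\C{1,1}$ and $P \mapsto v (t,x,P)$ is $\C{0,1}$; neither directly yields Lipschitz dependence of $D_x v$ on $P$. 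What you actually need is that the coefficient $D_x v\bigl(t, X_n (\tau;t,x), P_n (\tau)\bigr)$ in the variational equation converges to $D_x v\bigl(t, X_* (\tau;t,x), P_* (\tau)\bigr)$ uniformly on compacta, and for this joint continuity of $D_x v$ together with the already established uniform convergence of $(X_n, P_n)$ suffices, combined with a Gronwall argument on the linear matrix ODE. Your overall scheme --- differentiate the representation formula, control $D_x X_n - D_x X_*$ via the variational equation, and collect the pieces --- is sound; just be careful not to claim more regularity of $D_x v$ in $P$ than~\textbf{(v1)} actually provides.
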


 The $i$-th leader $P_i$ seeks a control
 $u_i \in \L\infty \big([0,T]; \overline{B (0,U)}\big)$ that minimizes
 the cost
 $\mathcal{J}_i = \int_{\reali^N} \rho (T,x) \, \psi (x) \, \d{x}$,
 which reduces to~\eqref{eq:26} in the case
 $\psi (x) = d (x, \mathcal{T}_i)$. Assume first that $P_i$ knows in
 advance the strategies $u_j$, for $j \neq i$, of the other
 controllers $P_j$, so that its task amounts to
 minimize~\eqref{eq:5}. Corollary~\ref{cor:weak} ensures that
 \begin{equation}
   \label{eq:5}
   \begin{array}{ccccc}
     \mathcal{J}_i
     & \colon
     & \L\infty\left([0,T]; \overline{B (0,U)}\right)
     & \to
     & \reali
     \\
     &
     & u_i
     & \to
     & \displaystyle\int_{\reali^N} \rho (T,x) \, \psi (x) \, \d{x}
   \end{array}
 \end{equation}
 is weak$\star$ continuous. Hence, by the weak$\star$ compactness of
 $\L\infty([0,T]; \overline{B (0,U)})$, there exists an optimal
 control $u_i^*$ that minimizes $\mathcal{J}_i$.

 Note however that this approach can hardly be used in a game
 theoretic setting, since it requires that $P_i$ is aware of all other
 strategies $u_j$, $j \neq i$, on the whole time interval $[0,T]$,
 which is unreasonable whenever different agents are competing.

 \smallskip

 We now proceed towards the definition of a non anticipative
 strategy. To this aim, we simplify the notation setting $P = P_i$,
 $u = u_i$, $\mathcal{J} = \mathcal{J}_i$ and comprising within the
 time dependence of the function $v$ all the other strategies $u_j$,
 for $j \neq i$. In this setting, we define a \emph{non anticipative}
 strategy $u$ for the controller $P$, i.e., a strategy $u = u (t)$
 that depends only on $\rho$ at times $s \in \left[0, t\right[$.

 For a positive (suitably small) $\Delta t$, we seek the best choice
 of a speed $w \in \overline{B(0,U)}$ on the interval
 $[t, t+\Delta t]$ such that the solution $\rho_w = \rho_w (\tau,x)$
 to
 \begin{equation}
   \label{eq:7}
   \left\{
     \begin{array}{l}
       \partial_\tau \rho_w
       +
       \div_x \left(
       \rho_w \;
       v\!\left(t, x, P (t) + (\tau-t)w\right)
       \right)
       =
       0
       \\
       \rho_w (t,x) = \rho (t, x)
     \end{array}
   \right.
   \qquad \tau \in [t, t+\Delta t]
 \end{equation}
 is likely to best contribute to decrease the value of
 $\mathcal{J}$. Remark that the dependence of $v$ on $t$
 in~\eqref{eq:7} is \emph{frozen} at time $t$. It is this choice that
 will later lead to a non anticipative strategy.

 We now verify that~\eqref{eq:7} is well posed.

 \begin{lemma}
   \label{lem:1}
   Fix positive $T$, $U$, and $\Delta t \in ]0, T[$. Let
   $v \in \C{0,1} ([0,T] \times \reali^N \times \reali^N; \reali^N)$.
   For any $\bar\rho \in \L1 (\reali^N; \reali)$,
   $\bar P \in \reali^N$, $u \in \L\infty ([0,T]; \overline{B(0,U)})$,
   $t \in \left[0, T - \Delta t\right[$ and $w \in \reali^N$,
   problem~\eqref{eq:7} admits a unique solution given by
   \begin{equation}
     \label{eq:28}
     \rho_w (\tau, x)
     =
     \rho \left(t, X_{t,w} (t; \tau, x)\right) \;
     \exp \left(
       -\int_t^\tau
       \div_x v\left(t, X_{t,w}(s; \tau, x), P (t) + (s-t) \, w\right) \d{s}
     \right)
   \end{equation}
   where
   \begin{equation}
     \label{eq:22}
     \tau \to X_{t,w} (\tau;\bar t,x)
     \quad \mbox{ solves } \quad
     \left\{
       \begin{array}{l}
         \xi' = v \left(t, \xi, P (t) + (\tau-t)w\right)
         \\
         \xi (\bar t) = x
       \end{array}
     \right.
     \mbox{ for } \bar t, \tau \in [t, t+\Delta t] \,.
   \end{equation}
   Moreover, if $\bar\rho \in \C{1,1} (\reali^N; \reali)$ and
   $\mathcal{L}^N (\spt \bar\rho) < +\infty$, for all
   $w_1,w_2 \in \reali^N$
   \begin{equation}
     \label{eq:3}
     \begin{array}{rcl}
       &
       & \norma{\rho_{w_1} (t+\Delta t) - \rho_{w_2} (t+\Delta t)}_{\L1 (\reali^N; \reali)}
       \\
       & \leq
       & \left(
         \norma{\grad_x \bar\rho}_{\L\infty (\reali^N; \reali^N)} \;
         \mathcal{L}^N \left(
         \spt \bar\rho, C e^{C \Delta t}
         \Delta t
         \right)
         +
         (1+C\,\Delta t) \norma{\bar\rho}_{\L1 (\reali^N; \reali)}
         \right)
       \\
       &
       & \quad
         \times
         C e^{2C \Delta t} \, (\Delta t)^2 \, \norma{w_1 - w_2}
     \end{array}
   \end{equation}
   where
   \begin{equation}
     \label{eq:14}
     C = \max \left\{
       \begin{array}{l}
         \norma{v}_{\L\infty ([t, t+\Delta t] \times \reali^N \times B (P (t), \Delta t \, U);\reali^N)},
         \\
         \norma{D_x v}_{\L\infty ([t, t+\Delta t] \times \reali^N \times B (P (t), \Delta t \, U); \reali^{N\times N})},
         \\
         \frac12 \;
         \norma{D_P v}_{\L\infty ([t, t+\Delta t] \times \reali^N \times \reali^N; \reali^{N\times N})},
         \\
         \norma{D_P \div_x v}_{\L\infty ([t, t+\Delta t] \times \reali^N \times \reali^N; \reali^N)}
       \end{array}
     \right\}
   \end{equation}
 \end{lemma}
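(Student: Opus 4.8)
The plan is to recognize problem~\eqref{eq:7} as a particular instance of the Cauchy problem~\eqref{eq:4} and to invoke Proposition~\ref{prop:0}. Indeed, freezing the first argument of $v$ at time $t$ and setting $V (\tau, x, Q) := v (t, x, Q)$, the map $V$ inherits from $v \in \C{0,1}$ the regularity required by \textbf{(v0)}, while the affine path $Q (\tau) = P (t) + (\tau - t) \, w$ solves $\dot Q = w$ with the constant control $w$ and $Q (t) = P (t)$. After shifting the time origin to $t$, \eqref{eq:7} is exactly~\eqref{eq:4} for the field $V$, so Proposition~\ref{prop:0} yields at once existence, uniqueness and the representation formula~\eqref{eq:28}, the flow~\eqref{eq:22} being the characteristic flow of $V$. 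Note that here $w$ need not satisfy $\norma{w} \le U$; this only affects the range of the $Q$-argument, hence the domains appearing in~\eqref{eq:14}.

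For the stability estimate~\eqref{eq:3} I would follow the computation producing~\eqref{eq:34}, specialized to the two affine paths $Q_i (\tau) = P (t) + (\tau - t) \, w_i$, $i = 1, 2$. Write $b_i (\tau, x) = v (t, x, Q_i (\tau))$, so that $\rho_{w_1}, \rho_{w_2}$ solve continuity equations with the \emph{same} datum $\bar\rho$ and the fields $b_1, b_2$, which differ only through their $P$-argument. The crucial elementary fact is
\[
  \norma{Q_1 - Q_2}_{\C0 ([t, t+\Delta t]; \reali^N)}
  =
  \sup_{\tau \in [t, t+\Delta t]} \modulo{\tau - t} \; \norma{w_1 - w_2}
  =
  \Delta t \; \norma{w_1 - w_2} ,
\]
so that the role played by $\norma{P_1 - P_2}_{\C0}$ in~\eqref{eq:34} is now taken by $\Delta t \, \norma{w_1 - w_2}$; combined with the overall factor $\Delta t$ coming from integrating over an interval of length $\Delta t$, this is what produces the quadratic factor $(\Delta t)^2$ in~\eqref{eq:3}.

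Concretely, the difference $\sigma = \rho_{w_1} - \rho_{w_2}$ solves $\partial_\tau \sigma + \div_x (\sigma \, b_1) = - \div_x\!\big(\rho_{w_2} (b_1 - b_2)\big)$ with $\sigma (t, \cdot) = 0$, so a Gr\"onwall argument on $\tau \mapsto \norma{\sigma (\tau)}_{\L1 (\reali^N; \reali)}$, with exponential rate governed by $\norma{D_x v}$, gives
\[
  \norma{\sigma (t+\Delta t)}_{\L1 (\reali^N; \reali)}
  \le
  e^{C \Delta t} \int_t^{t+\Delta t}
  \norma{\div_x\!\big(\rho_{w_2} (b_1-b_2)\big) (\tau)}_{\L1 (\reali^N; \reali)} \d\tau .
\]
Expanding $\div_x (\rho_{w_2} (b_1-b_2)) = \grad_x \rho_{w_2} \cdot (b_1-b_2) + \rho_{w_2} \, \div_x (b_1 - b_2)$ and bounding $\norma{b_1 - b_2} \le \norma{D_P v} \, \Delta t \, \norma{w_1 - w_2}$ and $\norma{\div_x (b_1 - b_2)} \le \norma{D_P \div_x v} \, \Delta t \, \norma{w_1 - w_2}$ produces exactly the $\norma{D_P v}$- and $\norma{D_P \div_x v}$-contributions collected in~\eqref{eq:14}; the mass factor $(1 + C \Delta t) \, \norma{\bar\rho}_{\L1 (\reali^N; \reali)}$ comes from the $\L1$-growth of $\rho_{w_2}$, while the first summand in~\eqref{eq:3} arises from estimating $\norma{\grad_x \rho_{w_2} (\tau)}_{\L1 (\reali^N; \reali)}$ through $\norma{\grad_x \bar\rho}_{\L\infty (\reali^N; \reali^N)}$ and the measure of the forward-transported support.

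The main obstacle is this last point: propagating the $\C{1,1}$-regularity of $\bar\rho$ along the frozen flow to control $\grad_x \rho_{w_2} (\tau)$ and, simultaneously, the spreading of the support. I would differentiate~\eqref{eq:28} in $x$, change variables back to the initial configuration via the backward flow $X_{t,w_2} (t; \tau, \cdot)$, and use that the maximal characteristic displacement on $[t, t+\Delta t]$ is bounded by $C e^{C \Delta t} \Delta t$, whence $\spt \rho_{w_2} (\tau) \subseteq B\big(\spt \bar\rho, C e^{C \Delta t} \Delta t\big)$, together with the Jacobian bound supplied by the exponential weight; here the finiteness $\mathcal{L}^N (\spt \bar\rho) < +\infty$ is used. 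Assembling these bounds and collapsing all the Lipschitz constants into the single $C$ of~\eqref{eq:14} then yields~\eqref{eq:3}; the restricted $P$-domain $B (P (t), \Delta t \, U)$ in the first two entries of~\eqref{eq:14} serves precisely to bound $\norma{v}$ and $\norma{D_x v}$ along the admissible characteristics.
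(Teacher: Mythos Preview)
Your treatment of existence, uniqueness and the representation formula is essentially the paper's: recognizing~\eqref{eq:7} as an instance of the abstract Cauchy problem and reading off~\eqref{eq:28} from the characteristics formula is exactly what the paper does (it quotes Lemma~\ref{lem:CL1}, which is what Proposition~\ref{prop:0} rests on).

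For the stability estimate~\eqref{eq:3} your route is genuinely different. The paper does \emph{not} write a PDE for $\sigma=\rho_{w_1}-\rho_{w_2}$ and run Gr\"onwall on $\norma{\sigma}_{\L1}$. Instead it sets $V_i(\tau,x)=v\big(t,x,P(t)+(\tau-t)w_i\big)$ and applies Lemma~\ref{lem:CL2}, which works directly on the explicit representation~\eqref{eq:9}: the difference $\rho_1-\rho_2$ is split into three pieces (difference of the arguments of $\bar\rho$, difference of the divergences in the exponential, and difference of the characteristic paths inside the exponential). The only gradient that ever appears is $\grad_x\bar\rho$, evaluated at the backward characteristics; no control of $\grad_x\rho_{w_2}(\tau)$ at intermediate times is needed. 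The inputs to Lemma~\ref{lem:CL2} are then the four bounds listed in the proof, in particular
\[
\norma{V_1-V_2}_{\L1([t,t+\Delta t];\L\infty(\reali^N;\reali^N))}
\le \tfrac12\,\norma{D_P v}_{\L\infty}\,(\Delta t)^2\,\norma{w_1-w_2},
\]
which is where the factor $\tfrac12$ in the third line of~\eqref{eq:14} and the $(\Delta t)^2$ come from.

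Your Duhamel/Gr\"onwall argument is also correct, but it trades one difficulty for another: you must propagate a bound on $\norma{\grad_x\rho_{w_2}(\tau)}_{\L1}$ through the frozen flow (your ``main obstacle''), which effectively means redoing parts of Lemma~\ref{lem:CL2} anyway. The paper's decomposition bypasses this by never leaving the level of the initial datum. A small quantitative remark: bounding $\norma{b_1-b_2}$ pointwise by $\norma{D_P v}\,\Delta t\,\norma{w_1-w_2}$ instead of $\norma{D_P v}\,(\tau-t)\,\norma{w_1-w_2}$ loses the $\tfrac12$ that the paper retains in~\eqref{eq:14}; and for the continuity equation the $\L1$-norm of $\sigma$ actually satisfies $\frac{\d{}}{\d\tau}\norma{\sigma}_{\L1}\le\norma{f}_{\L1}$ with no exponential rate, so your Gr\"onwall factor $e^{C\Delta t}$ is harmless but unnecessary at that step.
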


 \noindent (Above and in the sequel, $\xi' = \frac{\d\xi}{\d\tau}$).
 The proof of Lemma~\ref{lem:1} is deferred to Section~\ref{sec:TD}.

 \smallskip

 In the case of the functional~\eqref{eq:26}, a natural choice for the
 agent $P$ at time $t$ is then to choose a speed $w$ on the time
 interval $[t, t+\Delta t]$ to minimize the quantity
 \begin{equation}
   \label{eq:8}
   \mathcal{J}_{t, \Delta t} (w)
   =
   \int_{\reali^N} \rho_w (t+\Delta t,x) \; \psi (x) \, \d{x} \,.
 \end{equation}

 \begin{proposition}
   \label{prop:1}
   Fix positive $T, U$, $\Delta t \in ]0, T[$, and fix a boundedly
   supported initial datum $\bar \rho \in \L1 (\reali^N; \reali)$,
   $\bar P \in \reali^N$, a speed law
   $v \in \C{0,1} ([0,T] \times \reali^N \times \reali^N; \reali^N)$
   and a weight $\psi \in \L\infty (\reali^N; \reali)$.  Then, with
   the notation in~\eqref{eq:4} and~\eqref{eq:7}, for any
   $t \in \left[0, T\right[$ and $\Delta t \in \left]0, T-t\right]$
   the map
   \begin{displaymath}
     \begin{array}{ccccc}
       \mathcal{J}_{t, \Delta t}
       & \colon
       & \reali^N
       & \to
       & \reali
       \\
       &
       & w
       & \to
       & \displaystyle
         \int_{\reali^N} \rho_w (t+\Delta t,x) \; \psi (x) \d{x}
     \end{array}
   \end{displaymath}
   is well defined and Lipschitz continuous.
 \end{proposition}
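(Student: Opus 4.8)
We must show that $w \mapsto \mathcal{J}_{t,\Delta t}(w)$ is well defined and Lipschitz. The plan is to reduce everything to Lemma~\ref{lem:1}, which already gives the quantitative $\L1$ estimate~\eqref{eq:3} for the density map $w \mapsto \rho_w(t+\Delta t)$. Let me think about what this involves.

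The plan is to deduce the Lipschitz continuity of $w\mapsto\mathcal{J}_{t,\Delta t}(w)$ from the $\L1$--Lipschitz dependence of $\rho_w(t+\Delta t)$ on $w$ furnished by Lemma~\ref{lem:1}, by pairing the density difference against the bounded weight $\psi$.

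To see that $\mathcal{J}_{t,\Delta t}$ is well defined, I would start from the explicit representation~\eqref{eq:28} and change variables along the characteristics~\eqref{eq:22}: by Liouville's formula the Jacobian of $y\mapsto X_{t,w}(t+\Delta t;t,y)$ cancels the exponential factor, whence $\norma{\rho_w(t+\Delta t)}_{\L1}=\norma{\rho(t,\cdot)}_{\L1}<+\infty$. In particular $\rho_w(t+\Delta t)\in\L1(\reali^N;\reali)$, so its pairing with $\psi\in\L\infty(\reali^N;\reali)$ is an absolutely convergent integral and $\modulo{\mathcal{J}_{t,\Delta t}(w)}\leq\norma{\psi}_{\L\infty}\norma{\rho(t,\cdot)}_{\L1}$ for every $w\in\reali^N$.

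For the Lipschitz estimate the key inequality is
\begin{displaymath}
  \modulo{\mathcal{J}_{t,\Delta t}(w_1)-\mathcal{J}_{t,\Delta t}(w_2)}
  \leq
  \norma{\psi}_{\L\infty}\,
  \norma{\rho_{w_1}(t+\Delta t)-\rho_{w_2}(t+\Delta t)}_{\L1}\,,
\end{displaymath}
after which I would invoke~\eqref{eq:3} to bound the right-hand side by $L\,\norma{w_1-w_2}$, with $L$ equal to $\norma{\psi}_{\L\infty}$ times the bracketed factor appearing in~\eqref{eq:3}. This produces the asserted Lipschitz constant, depending only on $v$, $U$, $\Delta t$, $\psi$ and the datum through~\eqref{eq:14}.

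The main obstacle is that estimate~\eqref{eq:3} is established in Lemma~\ref{lem:1} only for $\C{1,1}$ data whose support has finite $\mathcal{L}^N$--measure, whereas here the datum $\rho(t,\cdot)$ of~\eqref{eq:7} carries only the $\L1$ regularity of $\bar\rho$. One natural route is to approximate $\bar\rho$ in $\L1$ by boundedly supported $\rho^n\in\C{1,1}$, apply~\eqref{eq:3} to each $\rho^n$, and pass to the limit via Corollary~\ref{cor:weak}; the delicate point is that the factor $\norma{\grad_x\bar\rho}_{\L\infty}$ in~\eqref{eq:3} must be controlled along the approximation in order to keep the constant finite. When $\psi$ is Lipschitz --- as for the target functional~\eqref{eq:26}, where $\psi(x)=d(x,\mathcal{T})$ is $1$--Lipschitz --- this difficulty evaporates: writing $\mathcal{J}_{t,\Delta t}(w)=\int_{\reali^N}\rho(t,y)\,\psi\!\left(X_{t,w}(t+\Delta t;t,y)\right)\d{y}$ through the same change of variables, the Lipschitz dependence of the flow on $w$ (obtained exactly as in~\eqref{eq:29}) yields $\modulo{\mathcal{J}_{t,\Delta t}(w_1)-\mathcal{J}_{t,\Delta t}(w_2)}\leq\Lip(\psi)\,\norma{\rho(t,\cdot)}_{\L1}\,\sup_{y}\norma{X_{t,w_1}(t+\Delta t;t,y)-X_{t,w_2}(t+\Delta t;t,y)}$, hence Lipschitz continuity already for merely $\L1$ data.
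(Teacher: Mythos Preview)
Your argument is exactly the paper's: well-definedness is read off Lemma~\ref{lem:1}, and Lipschitz continuity follows from
\[
\modulo{\mathcal{J}_{t,\Delta t}(w_1)-\mathcal{J}_{t,\Delta t}(w_2)}
\leq
\norma{\psi}_{\L\infty}\,\norma{\rho_{w_1}(t+\Delta t)-\rho_{w_2}(t+\Delta t)}_{\L1}
\]
together with~\eqref{eq:3}. The paper's proof stops there.

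You have, in fact, been more careful than the paper on one point. Estimate~\eqref{eq:3} in Lemma~\ref{lem:1} is stated only for data $\bar\rho\in\C{1,1}$ with $\mathcal{L}^N(\spt\bar\rho)<+\infty$, whereas Proposition~\ref{prop:1} assumes merely $\bar\rho\in\L1$ with bounded support; the paper invokes~\eqref{eq:3} without addressing this. Your diagnosis that mollification cannot repair the gap --- since the factor $\norma{\grad_x\bar\rho}_{\L\infty}$ in~\eqref{eq:3} is uncontrolled along any approximating sequence --- is correct, and the concern is not merely formal: with $N=1$, $v(t,x,P)=P$ (so $\div_x v=0$ and $\rho_w$ is a rigid translate of the datum), $\bar\rho(x)=x^{-1/2}\caratt{(0,1]}(x)\in\L1$ with bounded support, and $\psi=\sgn\in\L\infty$, one computes that $w\mapsto\mathcal{J}_{t,\Delta t}(w)$ has a square-root cusp and is not Lipschitz. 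Thus under the hypotheses exactly as stated the conclusion can fail; the paper is tacitly using more regularity on $\bar\rho$ than it assumes. Your change-of-variables alternative for Lipschitz $\psi$ is a valid route that genuinely covers the cost~\eqref{eq:26} used in the applications, and it requires only $\bar\rho\in\L1$.
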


 \noindent The main theorem now follows, providing explicit
 information on a non anticipative optimal choice of $w$.

 \begin{theorem}
   \label{thm:bo}
   Fix positive $T$, $U$, and $\Delta t \in ]0, T[$. Let
   $v \in \C2 ([0,T]\times\reali^N\times\reali^N; \reali^N)$,
   $\psi \in \L\infty (\reali^N; \reali)$ and a boundedly supported
   $\bar\rho \in \C1 (\reali^N; \reali)$. Define $\rho$ as the
   solution to~\eqref{eq:4} and $\rho_w$ as the solution
   to~\eqref{eq:7}, for a $w \in \reali^N$.  The map
   \begin{equation}
     \label{eq:33}
     \begin{array}{ccccc}
       \mathcal{J}_{t, \Delta t}
       & \colon
       & \reali^N
       & \to
       & \reali
       \\
       &
       & w
       & \to
       & \displaystyle \int_{\reali^N} \rho_w (t+\Delta t,x) \; \psi (x) \d{x}
     \end{array}
   \end{equation}
   admits the expansion
   \begin{equation}
     \label{eq:18}
     \mathcal{J}_{t, \Delta t} (w+\delta_w)
     =
     \mathcal{J}_{t, \Delta t} (w)
     +
     \grad_w\mathcal{J}_{t, \Delta t} (w)  \cdot \delta_w + o (\delta_w)
     \qquad \mbox{ as } w \to 0
   \end{equation}
   where, as $\Delta t \to 0$,
   \begin{equation}
     \label{eq:24}
     \begin{array}{@{}r@{\;}c@{\;}l@{}}
       \!\!\!\!\!\!
       \grad_w\mathcal{J}_{t, \Delta t} (w)\!
       & = \!
       & \displaystyle
         \dfrac{(\Delta t)^2}{2} \!\!\!
         \int_{\reali^N} \!\!
         \Big[ \!
         \grad_x \rho (t,x)
         D_P v\! \left(t, x, P (t)\right)
         \!\!-\!\!
         \rho (t,x) \grad_P \div_x v \! \left(t, x, P (t)\right)
         \!\! \Big] \!
         \psi (x) \d{x}
       \\[20pt]
       &
       & + o (\Delta t)^2 .
     \end{array}
     \!\!\!\!\!\!\!\!\!\!\!\!\!\!\!\!\!\!\!\!\!\!
   \end{equation}
 \end{theorem}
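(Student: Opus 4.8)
\begin{proofof}{Theorem~\ref{thm:bo}}
The plan is to establish the two assertions in turn: the first-order expansion~\eqref{eq:18}, which is the differentiability of $w \mapsto \mathcal{J}_{t,\Delta t}(w)$, and then the asymptotic identification~\eqref{eq:24} of the gradient as $\Delta t \to 0$. Throughout I would rely on the explicit representation~\eqref{eq:28} of $\rho_w$ and on the following structural fact: since $v \in \C2$ and $\bar\rho \in \C1$ is boundedly supported, the finite propagation speed of~\eqref{eq:7} (quantified by the constant in~\eqref{eq:14}) confines $\spt \rho_w(\tau,\cdot)$ to a fixed compact set $K$, uniformly for $\tau \in [t, t+\Delta t]$ and $w$ in any bounded ball. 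This compactness is what makes every $\L1$ integral below dominated and every interchange of limits legitimate.

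\smallskip

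For~\eqref{eq:18} I would prove that $w \mapsto \rho_w(t+\Delta t, \cdot)$ is of class $\C1$ from $\reali^N$ into $\L1(\reali^N; \reali)$. With $v \in \C2$, the characteristic flow $X_{t,w}$ of~\eqref{eq:22} and the exponential weight in~\eqref{eq:28} are continuously differentiable in $w$, jointly with $x$; differentiating the two factors of~\eqref{eq:28} and invoking the uniform support $K$ produces an $x$-integrable dominating function, so one may differentiate under the integral sign. As $\psi \in \L\infty$ is held fixed, this immediately gives that $\mathcal{J}_{t,\Delta t}$ is differentiable with
\begin{displaymath}
  \grad_w \mathcal{J}_{t,\Delta t}(w)
  =
  \int_{\reali^N} \partial_w \rho_w(t+\Delta t, x) \; \psi(x) \, \d{x} ,
\end{displaymath}
which is exactly the content of~\eqref{eq:18}. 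The Lipschitz bound~\eqref{eq:3} from Lemma~\ref{lem:1} supplies the a priori control underlying the dominated convergence.

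\smallskip

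For~\eqref{eq:24} I would compute $\partial_w \rho_w(t+\Delta t, \cdot)$ from~\eqref{eq:28} and extract its leading order in $\Delta t$. Differentiating~\eqref{eq:28} splits the result into two contributions: one from the $w$-displacement of the foot of the characteristic $X_{t,w}(t; t+\Delta t, x)$, and one from the $w$-derivative of the exponential (volume) weight. Linearizing~\eqref{eq:22}, that is differentiating $\xi' = v(t, \xi, P(t) + (\tau-t)w)$ in $w$, yields a variational equation whose forcing is $(\tau-t)\, D_P v$; integrating over $[t, t+\Delta t]$ and using $X_{t,w}(\cdot) \to x$ as $\Delta t \to 0$ shows that each contribution carries the factor $\tfrac{(\Delta t)^2}{2}$, the first pairing $\grad_x \rho(t,x)$ with $D_P v(t,x,P(t))$ and the second pairing $\rho(t,x)$ with $\grad_P \div_x v(t,x,P(t))$. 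Integrating against $\psi$ then produces the two integrals of~\eqref{eq:24}. As an independent check I would recover the same coefficient from the sensitivity equation for $\sigma = \partial_w \rho_w$, namely
\begin{displaymath}
  \partial_\tau \sigma + \div_x(\sigma \, v) = -(\tau-t)\, \div_x\!\left(\rho_w \, D_P v\right) ,
  \qquad \sigma(t, \cdot) = 0 ,
\end{displaymath}
whose Duhamel representation yields the factor $\tfrac{(\Delta t)^2}{2}$ and, after expanding the divergence, the two terms appearing in~\eqref{eq:24}.

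\smallskip

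The main obstacle is the uniform control of the remainders, needed to certify that the error is genuinely $o\big((\Delta t)^2\big)$ rather than merely $O\big((\Delta t)^2\big)$. This calls for second-order Taylor estimates, uniform in $x \in K$, for both the flow $X_{t,w}$ and the exponent in~\eqref{eq:28}: here the full strength $v \in \C2$ is used, since one must bound the second $\tau$-derivative and the mixed $w$-$\tau$ derivative of the characteristics, and must replace $D_P v$ and $\grad_P \div_x v$ along the characteristic by their values at $(t, x, P(t))$ with an $o(1)$ error as $\Delta t \to 0$, via uniform continuity on $K$. The finite speed of propagation together with~\eqref{eq:3} keeps all these estimates integrable in $x$, so the pointwise $o\big((\Delta t)^2\big)$ survives integration against $\psi \in \L\infty$.
\end{proofof}
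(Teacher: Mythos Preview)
Your proposal is correct and follows essentially the same route as the paper: differentiate the explicit representation~\eqref{eq:28} in $w$, using the variational equation for the $w$-dependence of the characteristics (with forcing $(\tau-t)\,D_P v$), and then extract the leading $(\Delta t)^2/2$ coefficient. The paper organizes the computation slightly differently: it isolates the differentiability and Taylor expansion of $w\mapsto X_{t,w}$ into a separate lemma (Lemma~\ref{lem:DX}, yielding exactly the $(\tau-t)^2/2\,D_P v$ expansion you describe), and then splits the difference quotient of $\mathcal{J}$ into three explicit pieces $(I)+(II)+(III)$ --- one from the foot of the characteristic and two from the exponential weight (the $P$-slot and the $\mathcal{X}$-slot separately) --- whereas you collapse the exponential into a single ``second contribution''; in the $\Delta t\to 0$ limit only the first two of the paper's pieces survive at order $(\Delta t)^2$, matching your two terms. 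Your additional sanity check via the Duhamel formula for the linearized equation $\partial_\tau\sigma+\div_x(\sigma v)=-(\tau-t)\,\div_x(\rho_w\,D_P v)$ is a nice alternative not present in the paper.
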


 \noindent The proof is deferred to Section~\ref{sec:TD}. On the basis
 of Theorem~\ref{thm:bo}, the definition of an effective non
 anticipative strategy for $P_i$ can be easily achieved as
 follows. Split the interval $[0,T]$ in smaller portions
 $\left[ t_\ell , t_{\ell+1} \right[$, where
 $t_\ell = \ell \, \Delta t$. On each of them, define
 $u_i (t) = w_\ell$, where $w_\ell$ minimizes on $\overline{B (0,U)}$
 the cost $\mathcal{J}_{t_\ell, \Delta t}$ defined
 in~\eqref{eq:33}. The leading term in the right hand side
 of~\eqref{eq:24} is independent of $w$, so that for $\Delta t$ small
 it is reasonable to choose
 \begin{displaymath}
   w_\ell = -
   \dfrac{ U \displaystyle\int_{\reali^N}
     \Big[
     \grad_x \rho (t_\ell,x) \;
     D_P v \left(t_\ell, x, P_i (t_\ell)\right)
     -
     \rho (t_\ell,x) \; \grad_P div_x v \left(t_\ell, x, P_i (t_\ell)\right)
     \Big]
     \psi (x) \d{x}}{\norma{\displaystyle\int_{\reali^N}
       \Big[
       \grad_x \rho (t_\ell,x) \;
       D_P v\! \left(t_\ell, x, P_i (t_\ell)\right)
       -
       \rho (t_\ell,x) \; \grad_P \div_x v \! \left(t_\ell, x, P_i (t_\ell)\right)
       \Big]
       \psi (x) \d{x}}}
 \end{displaymath}
 as long as the denominator above does not vanish, in which case we
 set $w_\ell=0$. Remark that, through the term $\rho_\ell$, the right
 hand side above depends on all the past values
 $w_0, \ldots, w_{\ell-1}$ attained by $u_i$. Formally, in the limit
 $\Delta t \to 0$, the above relations thus leads to a delayed
 integrodifferential equation.

 \section{Examples}
 \label{sec:NI}

 This section presents a few numerical integrations of the
 game~\eqref{eq:1}--\eqref{eq:26} in which a strategy is chosen as
 described in Section~\ref{sec:AR}.

 As the function $v$ in~\eqref{eq:1}, we choose
 \begin{equation}
   \label{eq:velocity-simulation}
   v (t, x, P)
   =
   \sum_{i = 1}^k a_i \left(\norma{x - P_i}\right) \, \left(P_i - x\right)\,,
 \end{equation}
 where $P \equiv (P_1, \ldots, P_k)$ and
 $a_i\colon \reali^+ \to \reali$, $i \in \left\{1, \cdots, k\right\}$,
 is chosen so that~\textbf{\emph{(v1)}} holds.  In other words, at
 time $t$, the velocity $v (t, x, P)$ of the individual at $x$ is the
 sum of $k$ vectors, each of them parallel to the straight line
 through $x$ and the agent's position $P_i$ and its strength depends
 on the distance between $x$ and $P_i$.  Typically, the functions
 $a_i$ is chosen so that for all $t$ and $P$, the map
 $x \to v (t, x, P)$ is either compactly supported, or vanishes as
 $\norma{x} \to +\infty$.  Note that $a_i > 0$ whenever $P_i$ is
 attractive, while $a_i < 0$ in the repulsive case. In the examples
 below, the targets are single points and, correspondingly, the cost
 $\psi_i$ is the distance from that point.

 With reference to~\eqref{eq:4}, in each of the integrations below we
 use the Lax--Friedrichs algorithm~\cite[Section~4.6]{LeVequeBook2002}
 with dimensional splitting~\cite[Section~19.5]{LeVequeBook2002} to
 integrate the conservation law, while the usual explicit forward
 Euler method is adequate for the ordinary differential equation. To
 ease the presentations of the results, we fix the space dimension
 $N = 2$.  Correspondingly, in each of the rectangular domains
 $\Omega$ considered below, we fix a rectangular regular grid
 consisting of $n_x \times n_y$ points. The treatment of the boundary
 $\partial\Omega$ is eased whenever the vector $v$ along
 $\partial\Omega$ points inward.

 \subsection{A Single Agent}
 \label{sse:1-agent-non-ant}

 Consider~(\ref{eq:4}) in the numerical domain
 $\Omega = [0, 10] \times [0, 10]$, with
 \begin{equation}
   \label{eq:piff-come-back}
   \begin{array}{@{}r@{\;}c@{\;}l@{}}
     N
     & =
     & 2 \,,
     \\
     k
     & =
     & 1 \,,
     \\
     m
     & =
     & 2 \,,
   \end{array}
   \qquad
   \begin{array}{r@{\;}c@{\;}l@{}}
     a_1 (\xi)
     & =
     & \frac{1}{\xi} \, e^{-\xi/10}\,,
     \\
     v (t,x,P)
     & =
     & e^{-\norma{x-P_1}/10} \, (P_1-x) \,,
     \\
     U
     & =
     & 3/2
   \end{array}
   \qquad
   \begin{array}{r@{\;}c@{\;}l@{}}
     \bar \rho
     & =
     & \caratt{[6,8]\times[2,8]},
     \\
     \bar P_1
     & \equiv
     & (3,2),
   \end{array}
   \qquad
   \mathcal{T}_1
   = \left\{(1,8)\right\} .
 \end{equation}
 We now compute the solution to~\eqref{eq:1} with $u$ piecewise
 constant given by the strategy~\eqref{eq:24}, constant on intervals
 $[j\, \Delta t, (j+1) \, \Delta t]$, where $\Delta t = 1/100$. The
 resulting solution, obtained on a grid of $6000 \times 6000$ cells,
 is displayed in Figure~\ref{fig:SingleAgent}.
 \begin{figure}[htpb]
   \centering
   \includegraphics[width=0.245\linewidth,trim=75 30 30
   10]{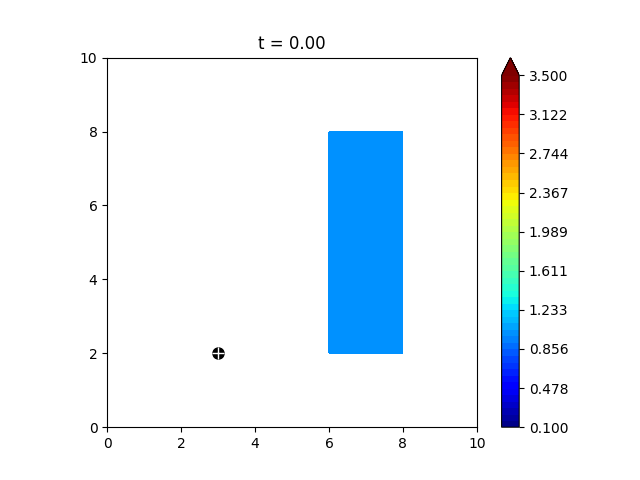}%
   \includegraphics[width=0.245\linewidth,trim=75 30 30
   10]{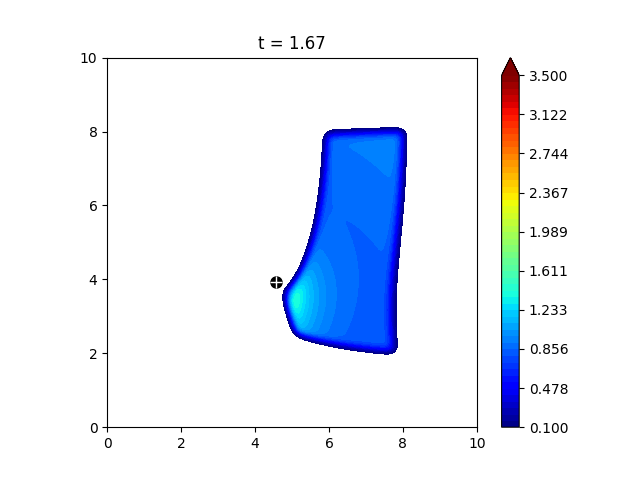}%
   \includegraphics[width=0.245\linewidth,trim=75 30 30
   10]{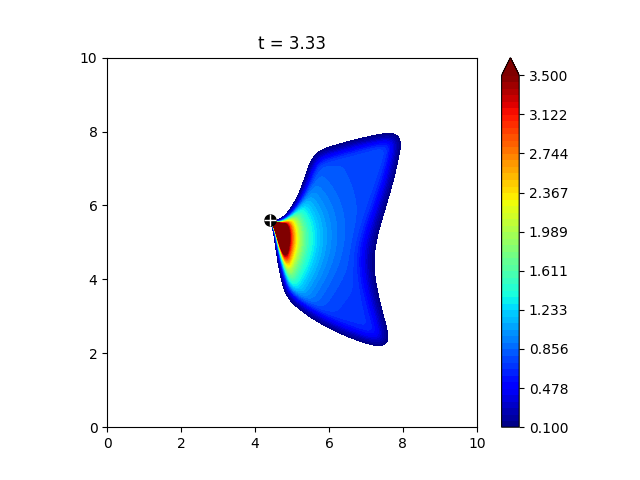}%
   \includegraphics[width=0.245\linewidth,trim=75 30 30
   10]{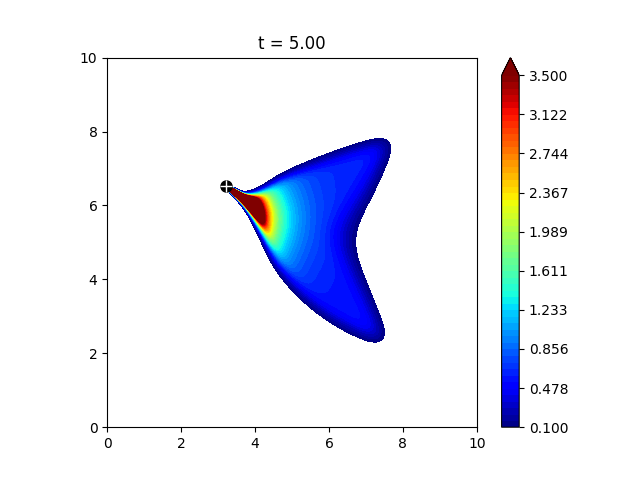}\\
   \includegraphics[width=0.245\linewidth,trim=75 30 30
   10]{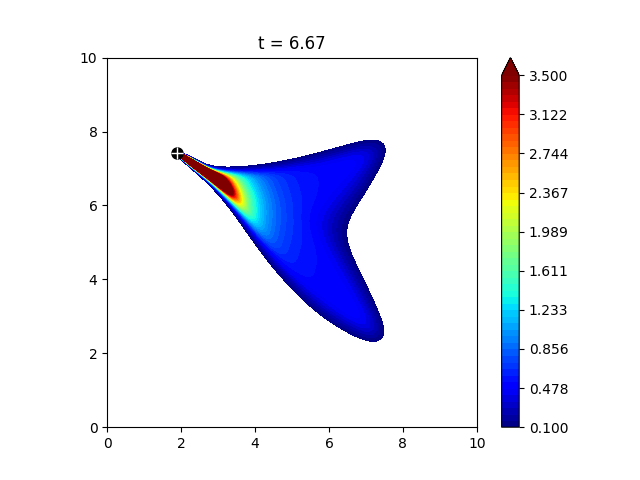}%
   \includegraphics[width=0.245\linewidth,trim=75 30 30
   10]{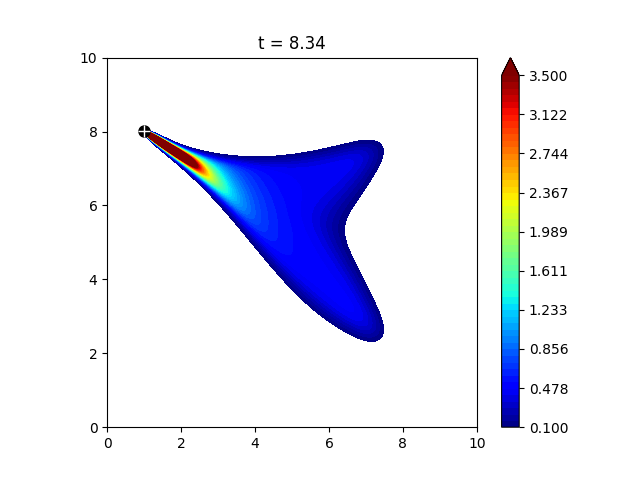}%
   \includegraphics[width=0.245\linewidth,trim=75 30 30
   10]{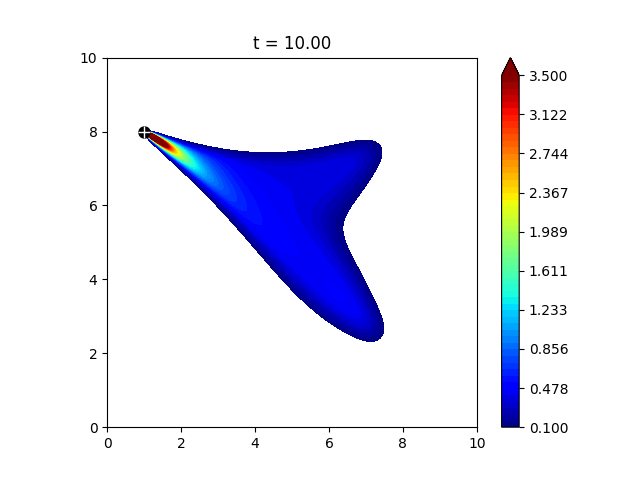}%
   \includegraphics[width=0.25\linewidth]{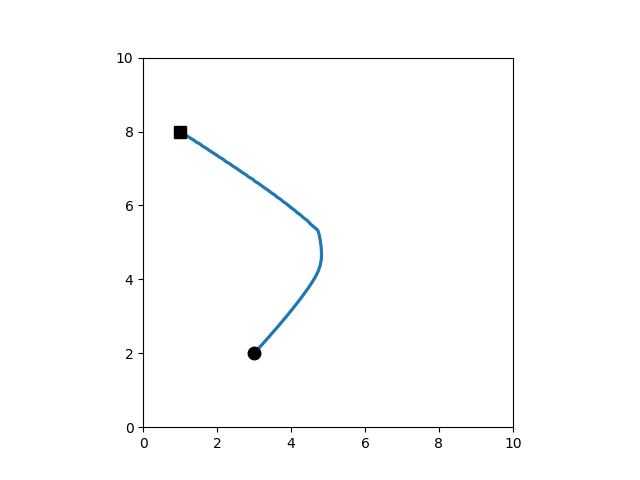}\\
   \Caption{Numerical integration of~\eqref{eq:1} with the
     strategy~\eqref{eq:24} and the
     parameters~\eqref{eq:piff-come-back}. The first 7 figures depict
     the contour plots of the solution $\rho$ and the position of $P$,
     the bottom right diagram displays the trajectory of $P_1$, whose
     initial position is $\left(3,2\right)$, drawn as a black circle.
     Here, $\Delta t = 1/100$.  Note that, in spite of the myopic
     nature of the strategy~\eqref{eq:24}, the leader first moves to
     the right and then turns back to the left.}
   \label{fig:SingleAgent}
 \end{figure}
 Remarkably, although the strategy~\eqref{eq:24} is fully
 \emph{myopic}, the leader $P_1$ does not move directly towards the
 target $\mathcal{T}_1$. On the contrary, it first moves to the right
 to \emph{collect} a higher quantity of individuals and then moves
 back to the left; see Figure~\ref{fig:SingleAgent}. The resulting
 cost~\eqref{eq:26} is $29.33$. 

 \subsection{Two Competing Attractive Agents}


 We now test the strategy~\eqref{eq:24} against an \emph{a priori}
 assigned strategy. More precisely, we let
 $\Omega = [0, 10] \times [0, 10]$, with
 \begin{equation}
   \label{eq:23}
   \begin{array}{@{}r@{\;}c@{\;}l@{}}
     N
     & =
     & 2 \,,
     \\
     k
     & =
     & 2 \,,
     \\
     m
     & =
     & 4 \,,
c   \end{array}
   \qquad
   \begin{array}{r@{\;}c@{\;}l@{}}
     a_1 (\xi)
     & =
     & \frac{1}{\xi} \, e^{-\xi/5}\,,
     \\
     a_2 (\xi)
     & =
     & \frac{1}{\xi} \, e^{-\xi/5}\,,
     \\
     v (t,x,P)
     & =
     & \mbox{as in~\eqref{eq:velocity-simulation}}
       \,,
     \\
     U
     & =
     & 3/2\,,
   \end{array}
   \qquad
   \begin{array}{r@{\;}c@{\;}l@{}}
     \bar \rho
     & =
     & \caratt{[7,9]\times[3,7]},
     \\
     \bar P_1
     & =
     & (8,5),
     \\
     \bar P_2
     & =
     & (8,5),
   \end{array}
   \qquad
   \begin{array}{@{}r@{\;}c@{\;}l@{}}
     \mathcal{T}_1
     & =
     &  \left\{(1, 9)\right\},
     \\
     \mathcal{T}_2
     & =
     &  \left\{(1, 1)\right\}.
   \end{array}
 \end{equation}
 Moreover, we first assign to $P_1$ the rectilinear trajectory
 \begin{equation}
   \label{eq:20}
   P_1 (t)
   =
   \left[
     \begin{array}{@{}c@{}}
       8\\5
     \end{array}
   \right] + \left[
     \begin{array}{@{}c@{}}
       -7/10\\ 2/5
     \end{array}
   \right] t\,,
   \quad \mbox{ corresponding to } \quad
   u_1 (t) = \left[
     \begin{array}{@{}c@{}}
       -7/10\\ 2/5
     \end{array}
   \right] \,.
 \end{equation}

 The agent $P_1$ follows a rectilinear trajectory towards the target
 located at the point $\left(1, 9\right)$. At the final time $T = 10$,
 the cost of player $P_1$, when alone, is $11.73$, see
 Table~\ref{tab:paper1}.
 \begin{figure}[htpb]
   \centering
   \includegraphics[width=0.245\linewidth,trim=75 30 30
   10]{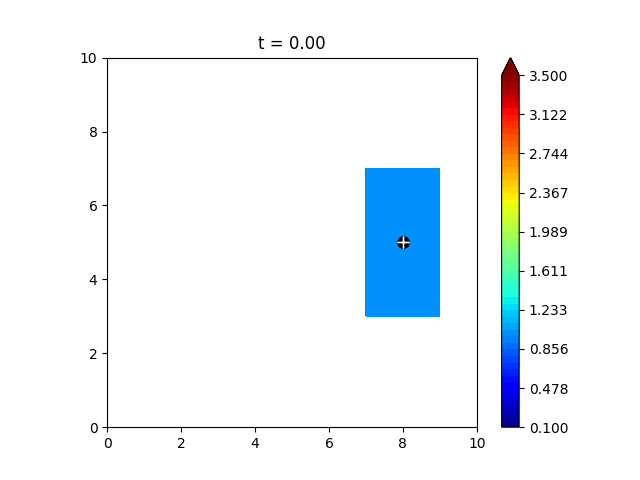}%
   \includegraphics[width=0.245\linewidth,trim=75 30 30
   10]{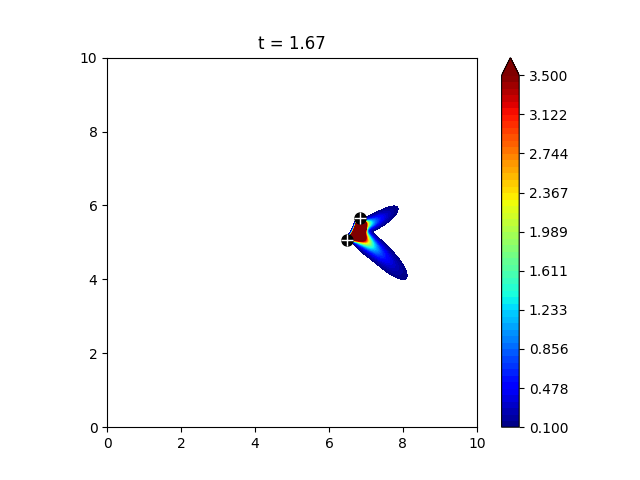}%
   \includegraphics[width=0.245\linewidth,trim=75 30 30
   10]{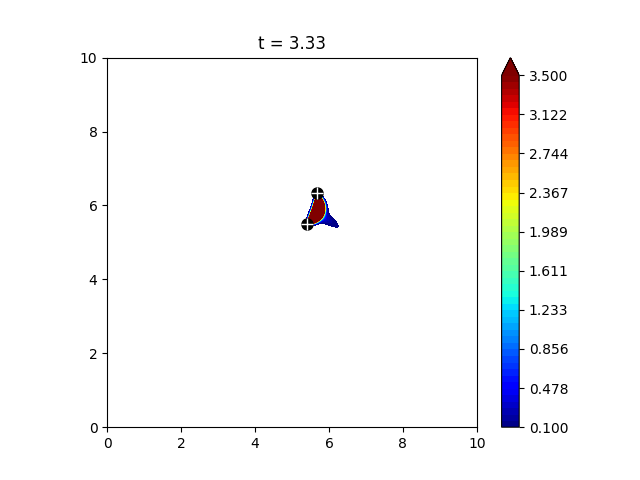}%
   \includegraphics[width=0.245\linewidth,trim=75 30 30
   10]{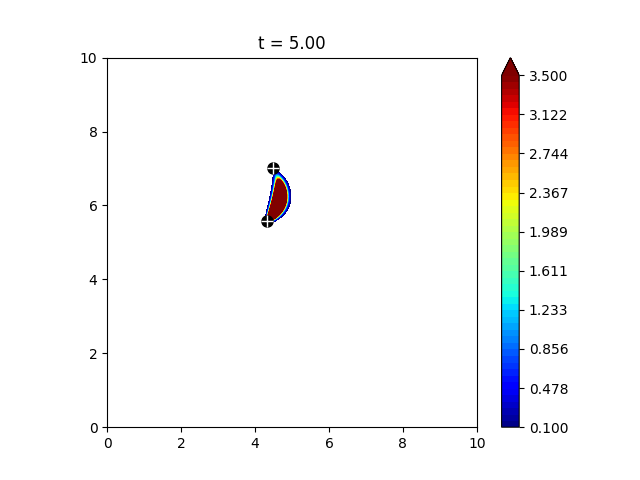}\\
   \includegraphics[width=0.245\linewidth,trim=75 30 30
   10]{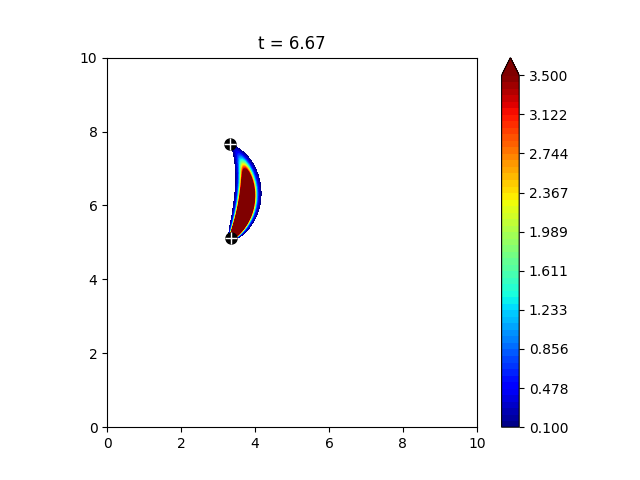}%
   \includegraphics[width=0.245\linewidth,trim=75 30 30
   10]{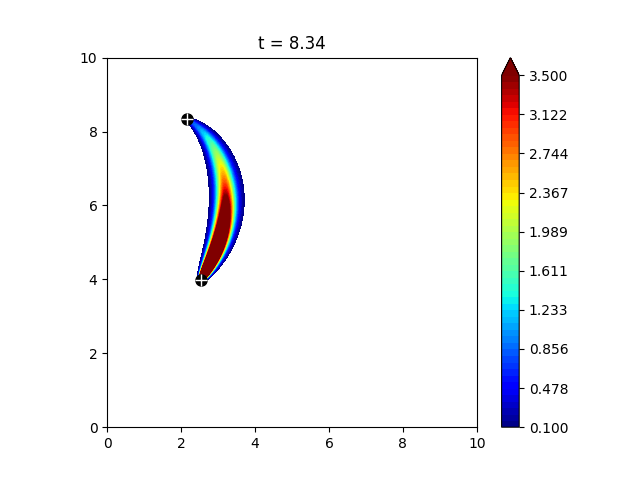}%
   \includegraphics[width=0.245\linewidth,trim=75 30 30
   10]{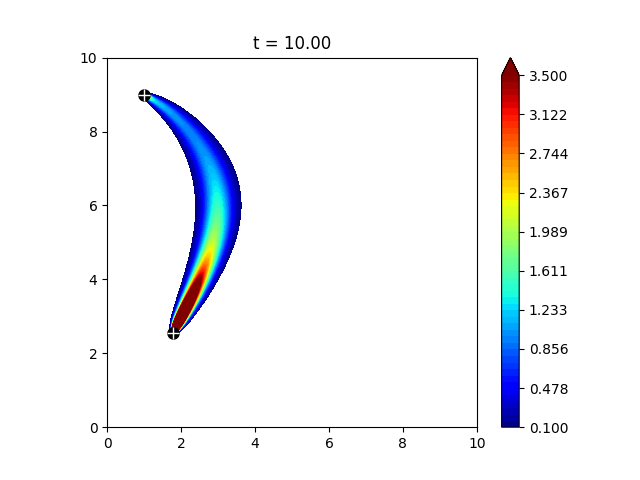}%
   \includegraphics[width=0.25\linewidth]{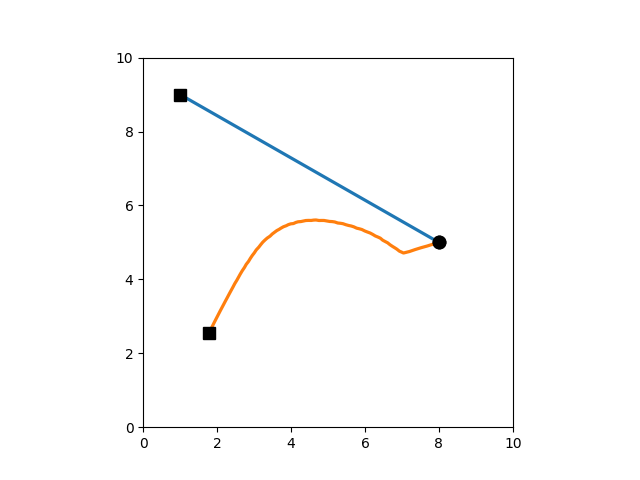}\\
   \Caption{Numerical integration of~\eqref{eq:1}--\eqref{eq:23} with
     two players. $P_1$ is assigned strategy~\eqref{eq:20}, while
     $P_2$ uses~\eqref{eq:24} with $\Delta t = 1/100$. The first 7
     figures depict the contour plots of the solution $\rho$, the
     bottom right diagram displays the trajectories of $P_1$ and
     $P_2$, whose initial positions are as in~\eqref{eq:23}. $P_2$
     wins.}
   \label{fig:paper1}
 \end{figure}
 Then, we insert also the player $P_2$, assigning its strategy $u_2$
 by means of~\eqref{eq:24}. The result is shown in
 Figure~\ref{fig:paper1}: strategy~\eqref{eq:24} leads to the victory
 of $P_2$. Here, $P_2$ first moves slightly up, superimposing its
 attraction to that of $P_1$. Then, it bends downwards attracting more
 individuals than $P_1$; see Figure~\ref{fig:paper1}.  The agent $P_2$
 goes initially towards the target located at $\left(1, 1\right)$,
 but, after a small amount of time, it turns up, attracting more
 individuals than $P_1$.

 The results pertaining the costs $\mathcal{J}_1$ and $\mathcal{J}_2$
 are summarized in Table~\ref{tab:paper1}.
 \begin{table}[htpb]
   \centering
   \begin{tabular}{ccccc}
     Strategy of $P_1$
     & Strategy of $P_2$
     & Cost $\mathcal{J}_1$ of $P_1$
     & Cost $\mathcal{J}_2$ of $P_2$
     \\
     \eqref{eq:20}
     & (absent)
     & $11.73$ 
     & //
     \\
     \eqref{eq:20}
     & \eqref{eq:24}
     & $36.41$ 
     & $32.65$ 
     \\
     \eqref{eq:24}
     & \eqref{eq:24}
     & $33.42$ 
     & $33.42$ 
   \end{tabular}
   \Caption{Values of the costs $\mathcal{J}_1$ and $\mathcal{J}_2$ resulting from~\eqref{eq:1}--\eqref{eq:23} with different strategies. On the first line, $P_1$ plays alone. The second line shows that strategy~\eqref{eq:24} wins against~\eqref{eq:20}. The third line correctly shows that, in a symmetric situation, if both players use strategy~\eqref{eq:24} the result is even.}
   \label{tab:paper1}
 \end{table}
 Note the sharp increase in the cost $\mathcal{J}_1$ due to $P_2$
 entering the game. The last line confirms that if the two players
 have the same effect on the individuals, the initial configuration is
 symmetric and both players use strategy~\eqref{eq:24}, then the
 players break even.


 \subsection{Automatic Cooperation among Repulsive Agents}


 The strategy introduced in Section~\ref{sec:AR} fosters a sort of
 \emph{cooperation} among agents having the same
 goal. Consider~\eqref{eq:4} with cost~\eqref{eq:26} and parameters,
 where $i=1, \ldots, 6$,
 \begin{equation}
   \label{eq:35}
   \begin{array}{@{}r@{\;}c@{\;}l@{}}
     N
     & =
     & 2 \,,
     \\
     k
     & =
     & 6 \,,
     \\
     m
     & =
     & 12 \,,
     \\
     T
     & =
     & 5 \,,
   \end{array}
   \qquad
   \begin{array}{r@{\;}c@{\;}l@{}}
     a_i (\xi)
     & =
     & - \frac{1}{\xi} \, e^{-\xi/5}\,,
     \\
     v (t,x,P)
     & =
     & \mbox{as in~\eqref{eq:velocity-simulation}}
       \,,
     \\
     U
     & =
     & 1\,,
   \end{array}
   \qquad
   \begin{array}{r@{\;}c@{\;}l@{}}
     \bar \rho
     & =
     & \caratt{[6,8]\times[3,7]},
     \\
     \bar P_1
     & =
     & (1,2),
     \\
     \bar P_2
     & =
     & (1,4),
     \\
     \bar P_3
     & =
     & (1,6),
   \end{array}
   \qquad
   \begin{array}{r@{\;}c@{\;}l@{}}
     \bar P_4
     & =
     & (1,8),
     \\
     \bar P_5
     & =
     & (9,4),
     \\
     \bar P_6
     & =
     & (9,6),
     \\
     \mathcal{T}_i
     & =
     &  \left\{(5, 5)\right\}.
   \end{array}
 \end{equation}
 Then, the application of the strategy defined in Section~\ref{sec:AR}
 automatically results in a team play, see
 Figure~\ref{fig:repulsion6}.
 \begin{figure}[htpb]
   \centering
   \includegraphics[width=0.245\linewidth,trim=75 30 30
   10]{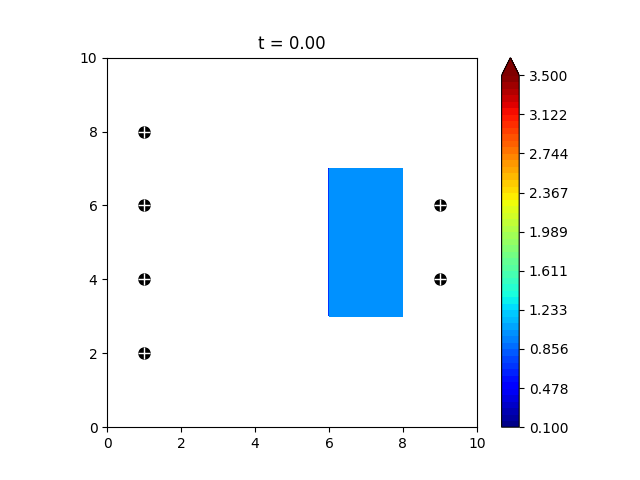}%
   \includegraphics[width=0.245\linewidth,trim=75 30 30
   10]{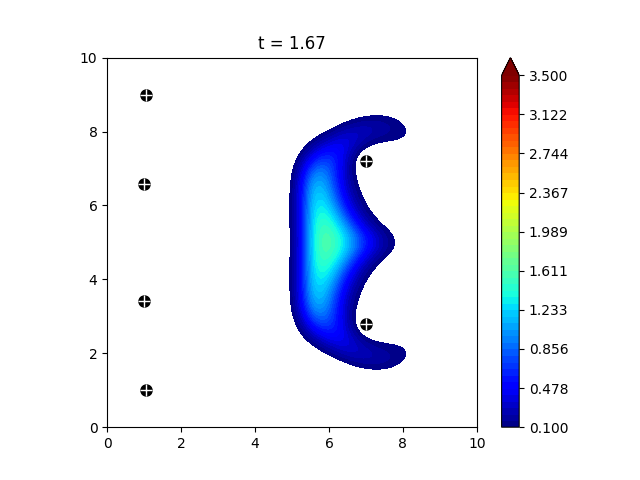}%
   \includegraphics[width=0.245\linewidth,trim=75 30 30
   10]{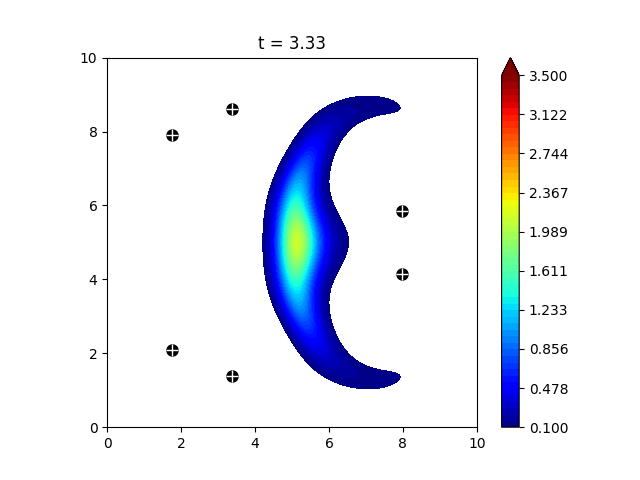}%
   \includegraphics[width=0.245\linewidth,trim=75 30 30
   10]{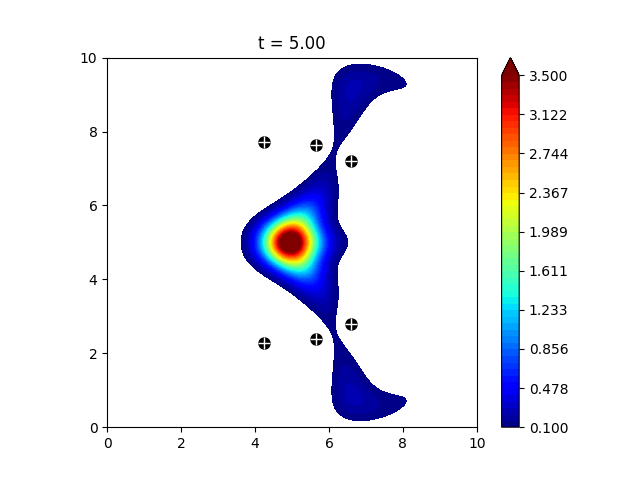}%
   \Caption{Integration of~\eqref{eq:4} with
     parameters~\eqref{eq:35}. The $6$ players are assigned the same
     target and automatically cooperate.  After time $T=5$, a portion
     of the individuals escapes the numerical domain, distorting the
     computation of the cost.}
   \label{fig:repulsion6}
 \end{figure}
 This integration is computed through a grid $3000 \times 3000$. The
 resulting final cost, common to all players, is $10.54$.


 \subsection{Competition/Cooperation among Attractive/Repulsive
   Agents}


 Finally, the following integrations of~\eqref{eq:4} show first that
 cooperation arises also between attractive and repulsive
 agents. Then, it emphasizes the clear difference between cooperation
 and competition. Consider first the case
 \begin{equation}
   \label{eq:36}
   \begin{array}{@{}r@{\;}c@{\;}l@{}}
     N
     & =
     & 2 \,,
     \\
     k
     & =
     & 3 \,,
     \\
     m
     & =
     & 6 \,,
     \\
     T
     & =
     & 5 \,,
   \end{array}
   \qquad
   \begin{array}{r@{\;}c@{\;}l@{}}
     a_1 (\xi) = a_3 (\xi)
     & =
     & - \frac{1}{\xi} \, e^{-\xi/5}\,,
     \\
     a_2 (\xi)
     & =
     & \frac{1}{\xi} \, e^{-\xi/5}\,,
     \\
     v (t,x,P)
     & =
     & \mbox{as in~\eqref{eq:velocity-simulation}}
       \,,
     \\
     U
     & =
     & 1\,,
   \end{array}
   \qquad
   \begin{array}{r@{\;}c@{\;}l@{}}
     \bar \rho
     & =
     & \caratt{[1,2]\times[3,7]},
     \\
     \bar P_1
     & =
     & (1,1),
     \\
     \bar P_2
     & =
     & (1,5),
     \\
     \bar P_3
     & =
     & (1,9),
   \end{array}
   \qquad
   \begin{array}{r@{\;}c@{\;}l@{}}
     \mathcal{T}_1
     & =
     &  \left\{(9, 5)\right\}.
   \end{array}
 \end{equation}
 whose solution is depicted in Figure~\ref{fig:attr-rep}, first line.
 \begin{figure}[htpb]
   \centering
   \includegraphics[width=0.245\linewidth,trim=75 30 30
   10]{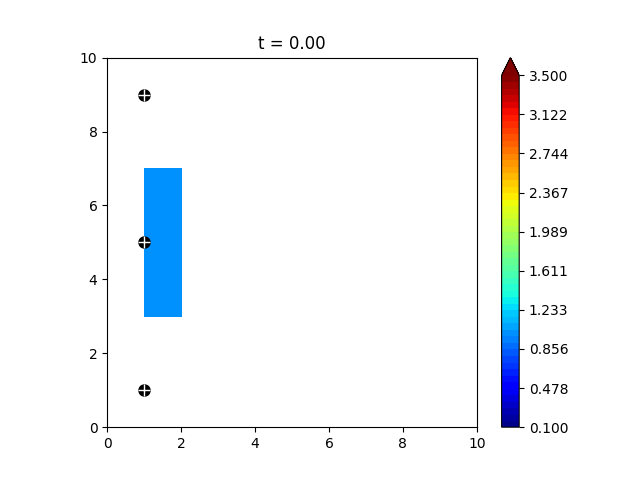}%
   \includegraphics[width=0.245\linewidth,trim=75 30 30
   10]{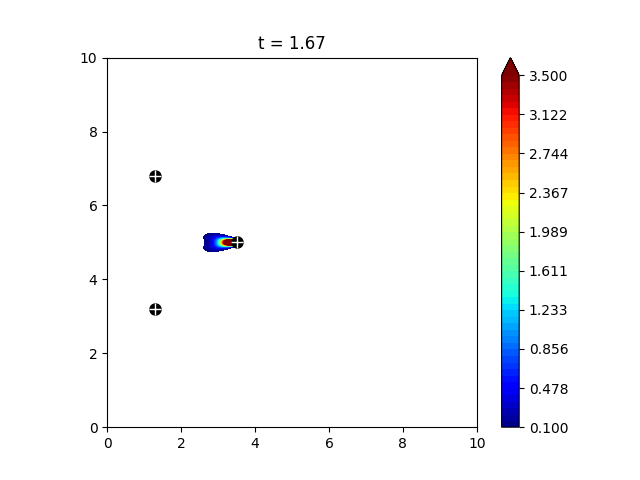}%
   \includegraphics[width=0.245\linewidth,trim=75 30 30
   10]{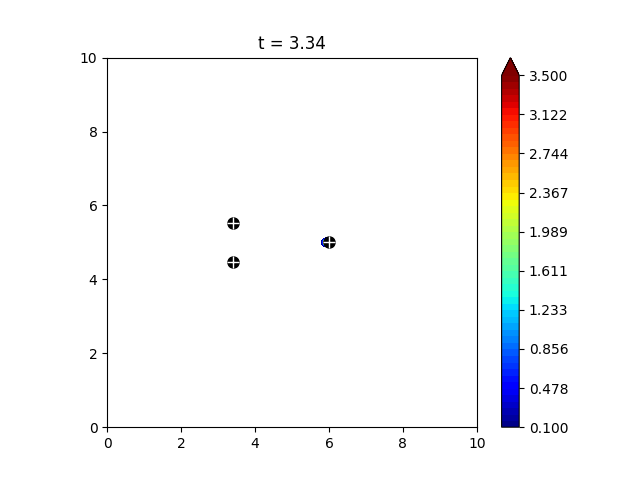}%
   \includegraphics[width=0.245\linewidth,trim=75 30 30
   10]{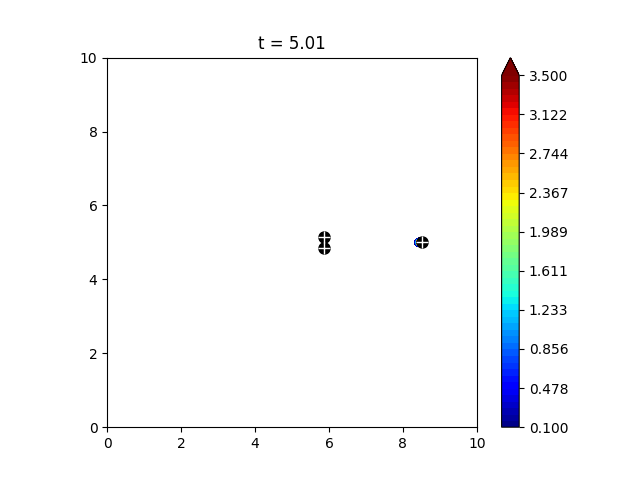}\\
   \includegraphics[width=0.245\linewidth,trim=75 30 30
   10]{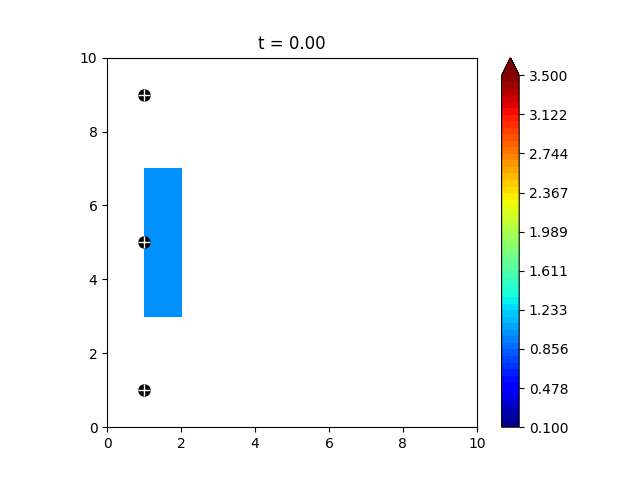}%
   \includegraphics[width=0.245\linewidth,trim=75 30 30
   10]{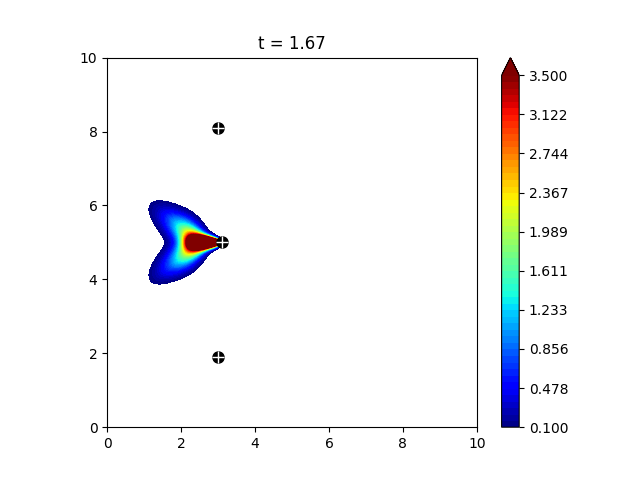}%
   \includegraphics[width=0.245\linewidth,trim=75 30 30
   10]{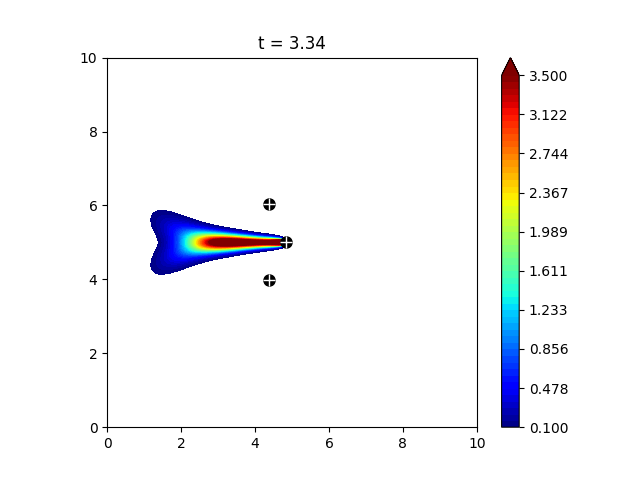}%
   \includegraphics[width=0.245\linewidth,trim=75 30 30
   10]{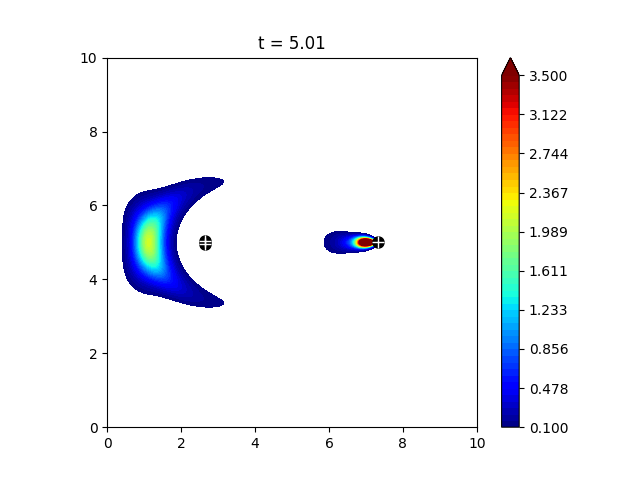}%
   \Caption{Upper line, integration of~\eqref{eq:4} with
     parameters~\eqref{eq:36} and with the same cost for all players
     $\psi_1 (x) = \psi_2 (x) = \psi_3 (x) = d (x, \mathcal{T}_1)$. In
     the lower line, we set $\psi_1 = \psi_3 = -\psi_2$ as
     in~\eqref{eq:37}. As a result, $P_1$ and $P_3$ \emph{steal} most
     of the followers to $P_2$. In both cases, $P_1$ and $P_3$
     are repulsive, while $P_2$ is attracting.}
   \label{fig:attr-rep}
 \end{figure}
 The final cost is $2.04$, the density $\rho$
 being highly concentrated near to the target $\mathcal{T}_1$. Then,
 we keep the same parameters, but modify the costs of $P_1$ and $P_3$
 setting
 \begin{equation}
   \label{eq:37}
   \psi_1 (x) = \psi_3 (x) = - d (x, \mathcal{T}_1)
   \quad \mbox{ and } \quad
   \psi_2 (x) = d (x, \mathcal{T}_1) \,.
 \end{equation}
 The resulting evolution is in Figure~\ref{fig:attr-rep}, second
 line. Note that $P_1$ and $P_3$ follow now a quite different
 trajectory, \emph{``cutting''} the density $\rho$ so that the final
 cost of $P_2$ raises to $26.68$.  In both integrations, the mesh
 consists of $3000 \times 3000$ points.

 \section{Technical Details}
 \label{sec:TD}

 Throughout, the continuous dependence of $V$ and $v$ on $t$ can be
 easily relaxed to mere measurability. In view of the applications
 below, the following result on ordinary differential equations
 deserves being recalled.

 \begin{lemma}[{\cite[Chapter~3]{BressanPiccoliBook}}]
   \label{lem:ode}
   Let $V_1, V_2 \in \C{0} ([0,T] \times \reali^N; \reali^N)$ be such
   that the maps $x \to V_i (t,x)$ are in
   $\C{0,1} (\reali^N; \reali^N)$ for $i=1,2$ and for all
   $t \in [0,T]$. Then, for all
   $(\bar t, \bar x) \in [0,T] \times \reali^N$ and
   $i \in \left\{1, 2\right\}$, the Cauchy Problem
   \begin{equation}
     \label{eq:10}
     \left\{
       \begin{array}{l}
         \dot x = V_i (t,x)
         \\
         x (\bar t) = \bar x
       \end{array}
     \right.
   \end{equation}
   admits, on the interval $[0,T]$, the unique solution
   $t \to X_i (t; \bar t, \bar x)$ and the following estimate holds,
   for all $t \in [0,T]$:
   \begin{equation}
     \label{eq:12}
     \begin{split}
       \norma{X_1 (t; \bar t, \bar x) - X_2 (t; \bar t, \bar x)} \le
       {} & \norma{V_1-V_2}_{\L1 (\langle \bar t, t\rangle; \L\infty
         (\reali^N; \reali))}
       \\
       & \times \exp \left( \norma{D_x V_2}_{\L\infty(\langle \bar t,
           t\rangle \times\reali^N; \reali^{N\times N})}
         \modulo{t-\bar t} \right) \,.
     \end{split}
   \end{equation}
   If moreover $x \to V_i (t, x) \in \C1 (\reali^N; \reali^N)$ for all
   $t \in [0,T]$, the map $x \to X_i (t; \bar t, \bar x)$ is
   differentiable and its derivative
   $t \to D_x X_i (t; \bar t, \bar x)$ solves the linear matrix
   ordinary differential equation
   \begin{equation}
     \label{eq:2}
     \left\{
       \begin{array}{l}
         \dot Y = D_x V_i\left(t, X_i (t; \bar t, \bar x)\right) \, Y
         \\
         Y (\bar t) = \Id \,.
       \end{array}
     \right.
   \end{equation}
 \end{lemma}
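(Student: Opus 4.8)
The plan is to recover this as the classical Cauchy--Lipschitz theorem together with its two standard corollaries, continuous dependence and differentiability of the flow, so I only outline the structure. Throughout set $L_i = \norma{D_x V_i}_{\L\infty ([0,T]\times\reali^N; \reali^{N\times N})}$; by Rademacher's theorem this essential supremum of the a.e.\ defined Jacobian coincides with the global, uniform in $t$, Lipschitz constant of $x \to V_i (t,x)$, and it is finite exactly under the hypotheses that render the right hand side of~\eqref{eq:12} meaningful. To establish existence and uniqueness for~\eqref{eq:10}, I would rewrite it as $x (t) = \bar x + \int_{\bar t}^t V_i (s, x (s)) \, \d s$: the right hand side is a contraction on the continuous functions over a small time interval around $\bar t$ as soon as $\delta\, L_i < 1$, so Banach's fixed point theorem yields a unique local solution, and patching gives a unique maximal one. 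Global existence on all of $[0,T]$ follows from the linear growth $\norma{V_i (t,x)} \le \norma{V_i (t,0)} + L_i \norma{x}$ implied by the Lipschitz bound, since Grönwall's inequality applied to $t \to \norma{x (t)}$ excludes finite time blow up; this defines $t \to X_i (t; \bar t, \bar x)$.

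For the stability estimate~\eqref{eq:12}, I would subtract the integral formulations of $X_1$ and $X_2$ and insert $\pm\, V_2 (s, X_1 (s; \bar t, \bar x))$, obtaining
\[
  \norma{X_1 (t) - X_2 (t)}
  \le
  \norma{V_1 - V_2}_{\L1 (\langle \bar t, t\rangle; \L\infty (\reali^N;\reali))}
  +
  L_2 \, \modulo{\int_{\bar t}^t \norma{X_1 (s) - X_2 (s)} \, \d s} \,,
\]
where the first term collects the differences $\norma{V_1 (s,\cdot) - V_2 (s,\cdot)}_{\L\infty}$ and the second uses the $L_2$--Lipschitz continuity of $V_2$. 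Grönwall's lemma, applied forward in time when $t > \bar t$ and backward when $t < \bar t$ (the two cases the symmetric interval $\langle \bar t, t\rangle$ encodes), then produces the factor $\exp (L_2 \modulo{t - \bar t})$ and yields~\eqref{eq:12}.

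Finally, under the extra assumption $x \to V_i (t,x) \in \C1 (\reali^N; \reali^N)$, I would prove differentiability of the flow and the variational equation~\eqref{eq:2}. Fixing a direction $e_j$ and setting $Z_h (t) = X_i (t; \bar t, \bar x + h\, e_j) - X_i (t; \bar t, \bar x)$, the fundamental theorem of calculus along the segment joining the two trajectories gives $\dot Z_h = A_h (t)\, Z_h$ with $Z_h (\bar t) = h\, e_j$ and $A_h (t) = \int_0^1 D_x V_i (t, X_i (t) + \theta\, Z_h (t)) \, \d\theta$. By~\eqref{eq:12} one has $\norma{Z_h (t)} \le \modulo{h}\, e^{L_i \modulo{t - \bar t}} \to 0$ uniformly as $h \to 0$, so continuity of $D_x V_i$ forces $A_h \to D_x V_i (t, X_i (t; \bar t, \bar x))$ uniformly on $\langle \bar t, t\rangle$; the rescaled quotient $W_h = Z_h / h$ solves $\dot W_h = A_h\, W_h$, $W_h (\bar t) = e_j$, and continuous dependence of linear systems on their coefficients gives $W_h \to W$, where $W$ solves $\dot W = D_x V_i (t, X_i)\, W$, $W (\bar t) = e_j$. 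Hence $x \to X_i (t; \bar t, x)$ is differentiable at $\bar x$ with $j$-th partial derivative $W$, i.e.\ $Y = D_x X_i$ solves~\eqref{eq:2}. I expect this last step to be the only genuinely delicate one: the passage to the limit $h \to 0$ must be justified through the \emph{uniform} (not merely pointwise) convergence $A_h \to D_x V_i (\cdot, X_i)$ together with the uniform bound $\norma{A_h} \le L_i$, a final Grönwall estimate on $\norma{W_h - W}$ turning these two facts into genuine convergence and closing the argument.
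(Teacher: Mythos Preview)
Your outline is a correct and complete sketch of the classical Cauchy--Lipschitz theory: the contraction argument for local existence, the linear-growth bound for globality, the add-and-subtract plus Gr\"onwall for~\eqref{eq:12}, and the difference-quotient limit for~\eqref{eq:2} are all standard and correctly stated. The one point worth flagging is that your passage $A_h \to D_x V_i(\cdot, X_i)$ \emph{uniformly} on $\langle \bar t, t\rangle$ implicitly uses joint continuity of $(t,x) \mapsto D_x V_i(t,x)$, not just continuity in $x$ for each fixed $t$; this is the natural reading of the hypotheses and is what is needed, but be aware of it.

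As for comparison with the paper: the paper does \emph{not} prove this lemma. It is stated with an explicit attribution to~\cite[Chapter~3]{BressanPiccoliBook} and is simply recalled as background, so there is no in-paper argument to compare against. Your proof is exactly the kind of argument one finds in that reference.
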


 \begin{lemma}
   \label{lem:CL1}
   Let $V \in \C{0} ([0,T] \times \reali^N; \reali^N)$ be such that
   the map $x \to V (t,x)$ is in $\C{1} (\reali^N; \reali^N)$ for
   $i = 1, 2$ and for all $t \in [0,T]$.  Then, for all
   $\bar t \in \left[0,T\right[$, $i \in \left\{1,2\right\}$, and
   $\bar \rho \in \L1 (\reali^N; \reali)$, the Cauchy Problem
   \begin{equation}
     \label{eq:6}
     \left\{
       \begin{array}{l}
         \partial_t \rho + \div_x \left(\rho \, V (t,x)\right) = 0
         \\
         \rho (\bar t, x) = \bar \rho (x)
       \end{array}
     \right.
   \end{equation}
   admits, on the interval $[\bar t,T]$, the unique {Kru\v zkov}
   solution
   \begin{equation}
     \label{eq:9}
     \rho (t,x)
     =
     \bar\rho\left(X (\bar t; t, x)\right) \;
     \exp \left(
       -\int_{\bar t }^t \div_x V\left(\tau, X (\tau;t,x)\right) \d\tau
     \right)
   \end{equation}
   and if $\spt\bar\rho$ is bounded, then
   \begin{equation}
     \label{eq:11}
     \spt \rho (t)
     \subseteq
     B
     \left(
       \spt\rho (\bar t),
       \norma{V}_{\L\infty (\langle\bar t, t\rangle \times \spt \bar\rho; \reali^N)}
       \modulo{t-\bar t}\,
       e^{\norma{D_x V}_{\L\infty (\langle \bar t, t \rangle \times \reali^N; \reali^{N\times N})} \modulo{t-\bar t}}
     \right) .
   \end{equation}
 \end{lemma}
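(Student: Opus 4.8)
The plan is to build the solution explicitly by the method of characteristics, identify it with the Kru\v zkov solution first for smooth data and then by density, and finally read the support estimate off a Grönwall bound on the flow.

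First I would apply Lemma~\ref{lem:ode} (with the single field $V$, so that the spurious index $i=1,2$ in the statement plays no role) to obtain, for every $\bar t \in \left[0,T\right[$ and every $x$, the unique characteristic $\tau \mapsto X(\tau;\bar t, x)$ on $[0,T]$. Since $x \mapsto V(t,x)$ is $\C1$, the flow $x \mapsto X(\tau; \bar t, x)$ is a $\C1$ diffeomorphism of $\reali^N$ enjoying the group property $X(\tau; s, X(s; \bar t, x)) = X(\tau; \bar t, x)$; in particular $X(\bar t; t,\cdot)$ inverts $X(t; \bar t, \cdot)$, which makes~\eqref{eq:9} well defined. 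By Liouville's formula its Jacobian equals $\exp\!\big(\int_{\bar t}^t \div_x V(\tau, X(\tau;\bar t, x))\,\d\tau\big)$, a fact I shall use twice below. Next, for $\bar\rho \in \C1(\reali^N;\reali)$ with bounded support I would verify that~\eqref{eq:9} is a classical solution: differentiating $\rho$ along a characteristic and using $\frac{\d{}}{\d\tau}\rho(\tau, X(\tau;\bar t, x)) = \partial_t\rho + V\cdot\grad_x\rho$ turns the continuity equation into the linear ODE $\frac{\d{}}{\d\tau}\rho = -\rho\,\div_x V$ along $X$, whose integration gives exactly~\eqref{eq:9}. A classical solution is in particular a Kru\v zkov solution.

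For general $\bar\rho \in \L1(\reali^N;\reali)$, the key observation is that the map $\bar\rho \mapsto \rho(t,\cdot)$ defined by~\eqref{eq:9} is an $\L1$ isometry: changing variables $x = X(t;\bar t, y)$ and using the group property, the Liouville Jacobian cancels the exponential weight term by term, so that $\norma{\rho(t)}_{\L1(\reali^N;\reali)} = \norma{\bar\rho}_{\L1(\reali^N;\reali)}$. Hence, approximating $\bar\rho$ by smooth compactly supported data $\bar\rho_n \to \bar\rho$ in $\L1$, the associated classical solutions form a Cauchy sequence and converge in $\C0([\bar t,T];\L1(\reali^N;\reali))$ to the function given by~\eqref{eq:9} for the datum $\bar\rho$. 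By the $\L1$ stability of Kru\v zkov solutions this limit is the Kru\v zkov solution, and uniqueness follows from the Kru\v zkov $\L1$ contraction (the flux $\rho\,V$ being linear in $\rho$).

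Finally, for the support estimate I would note that $\rho(t,x)\neq 0$ forces $X(\bar t; t, x) \in \spt\bar\rho$, i.e. $x = X(t;\bar t, y)$ for some $y \in \spt\bar\rho$, so $\spt\rho(t)$ is contained in the image of $\spt\bar\rho$ under $X(t;\bar t,\cdot)$. Writing $\phi(\tau) = \norma{X(\tau;\bar t, y) - y}$ and inserting $\pm V(\tau, y)$ into $X(t;\bar t, y) - y = \int_{\bar t}^t V(\tau, X(\tau;\bar t, y))\,\d\tau$ yields $\phi(t) \le R\,\modulo{t - \bar t} + L\int_{\bar t}^t \phi(\tau)\,\d\tau$, where $R = \norma{V}_{\L\infty(\langle\bar t,t\rangle \times \spt\bar\rho; \reali^N)}$ and $L = \norma{D_x V}_{\L\infty(\langle\bar t,t\rangle\times\reali^N; \reali^{N\times N})}$; Grönwall's inequality then gives $\phi(t) \le R\,\modulo{t-\bar t}\,e^{L\modulo{t-\bar t}}$, which is~\eqref{eq:11}. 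The main obstacle is the identification of the explicit formula with the Kru\v zkov solution for merely $\L1$ data, for which the isometry property and the $\L1$ stability of Kru\v zkov solutions are essential; within the support estimate, the delicate point is closing the Grönwall loop through the term $\pm V(\tau,y)$ so that only the support-restricted norm of $V$ appears in the final bound.
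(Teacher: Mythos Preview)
Your argument is correct. The support estimate is exactly the paper's proof: insert $\pm V(\tau,y)$ with $y\in\spt\bar\rho$ and close with Gr\"onwall, so that only the support-restricted norm of $V$ enters the constant. For the identification of~\eqref{eq:9} with the Kru\v zkov solution, the paper simply invokes~\cite[Lemma~5.1]{ColomboHertyMercier}, whereas you supply a self-contained density argument via the $\L1$ isometry $\bar\rho\mapsto\rho(t,\cdot)$ (Liouville cancelling the exponential weight under the change of variables $x=X(t;\bar t,y)$) and then pass to the limit using the stability of Kru\v zkov solutions. This buys independence from the external reference at the cost of a few extra lines; mathematically it is the same mechanism underlying that lemma.
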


\begin{proof}
  The fact that~\eqref{eq:9} solves~\eqref{eq:6} in {Kru\v zkov} sense
  follows from~\cite[Lemma~5.1]{ColomboHertyMercier}. To
  prove~\eqref{eq:11}, compute
  \begin{eqnarray*}
    \norma{X (t; \bar t, x) - x}
    & \leq
    & \modulo{\int_{\bar t}^t
      \norma{V \left(\tau; X (\tau; \bar t, x)\right)} \d\tau}
    \\
    & \leq
    & \modulo{
      \int_{\bar t}^t
      \left(
      \norma{V (\tau, x)}
      +
      \norma{V \left(\tau; X (\tau; \bar t, x)\right) - V (\tau,x)}
      \right) \d\tau
      }
    \\
    & \leq
    &
      \norma{V }_{\L\infty (\langle\bar t, t\rangle \times \spt \bar\rho; \reali^N)}
      \modulo{t-\bar t}
    \\
    &
    &\quad
      +
      \modulo{
      \int_{\bar t}^t
      \norma{D_x V}_{\L\infty (\langle\bar t, t \rangle \times \reali^N; \reali^{N\times N})}
      \norma{X (\tau;\bar t, x) - x} \d\tau
      }
  \end{eqnarray*}
  and by Gr\"onwall Lemma, see, e.g., \cite[Chapter~3,
  Lemma~3.1]{BressanPiccoliBook},
  \begin{displaymath}
    \norma{X (t; \bar t, x) - x}
    \leq
    \norma{V }_{\L\infty (\langle\bar t, t\rangle \times \spt \bar\rho; \reali^N)} \,
    \modulo{t-\bar t} \;
    e^{\norma{D_x V}_{\L\infty (\langle\bar t, t \rangle \times \reali^N; \reali^{N\times N})}
      \modulo{t-\bar t}} \,,
  \end{displaymath}
  completing the proof.
\end{proof}

\begin{lemma}
  \label{lem:CL2}
  Let $V_1, V_2 \in \C{0} ([0,T] \times \reali^N; \reali^N)$ be such
  that both maps $x \to V_i (t,x)$, $i = 1, 2$, are in
  $\C{1,1} ([0,T]\times\reali^N; \reali^N)$. If
  $\bar\rho \in \C{0,1} (\reali^N;\reali)$, then
  \begin{eqnarray*}
    \!\!\!\!\!\!
    &
    & \norma{\rho_1 (t)-\rho_2 (t)}_{\L1 (\reali^N; \reali)}
    \\
    \!\!\!\!\!\!
    & \leq
    & \norma{\grad_x \bar\rho}_{\L\infty (\reali^N; \reali^N)} \;\,
      \mathcal{L}^N \! \left(
      \spt \bar\rho, C e^{C \modulo{t-\bar t}}
      \modulo{t-\bar t}
      \right)
      e^{2C \modulo{t-\bar t}}
      \norma{V_1-V_2}_{\L1 (\langle \bar t, t\rangle; \L\infty (\reali^N; \reali))}
    \\
    \!\!\!\!\!\!
    &
    & \!\!
      +
      \left(
      \norma{\div_x (V_1 - V_2)}_{\L\infty (\langle\bar t, t\rangle \times \reali^N; \reali)}
      +
      C \norma{V_1-V_2}_{\L1 (\langle \bar t, t\rangle; \L\infty (\reali^N; \reali))}
      \right)
          \norma{\bar\rho}_{\L1 (\reali^N; \reali)}
          e^{2C \modulo{t-\bar t}}
          \modulo{t-\bar t}
  \end{eqnarray*}
  where
  \begin{equation}
    \label{eq:13}
    C
    =
    \max_{i=1,2} \left\{
      \begin{array}{l}
        \norma{V_i}_{\L\infty (\langle\bar t, t\rangle \times \reali^N; \reali^N)}
        \\
        \norma{D_x V_i}_{\L\infty (\langle\bar t, t\rangle \times \reali^N; \reali^{N\times N})}
        \\
        \norma{\grad_x \div_x V_i}_{\L\infty (\langle\bar t, t\rangle \times \reali^N; \reali^N)}
      \end{array}
    \right\} \,.
  \end{equation}
\end{lemma}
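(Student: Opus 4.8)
The plan is to estimate the difference $\rho_1(t,x)-\rho_2(t,x)$ pointwise, starting from the explicit representation~\eqref{eq:9} furnished by Lemma~\ref{lem:CL1}, and then to integrate in $x$. Write $Y_i = X_i(\bar t; t, x)$ for the two backward characteristic endpoints and $E_i(t,x) = \exp\left(-\int_{\bar t}^t \div_x V_i\left(\tau, X_i(\tau; t, x)\right)\d{\tau}\right)$ for the amplitude weights, so that $\rho_i(t,x) = \bar\rho(Y_i)\,E_i(t,x)$. Adding and subtracting $\bar\rho(Y_2)\,E_1$ splits the difference into a \emph{transport} contribution $[\bar\rho(Y_1)-\bar\rho(Y_2)]\,E_1$, due to the two flows landing at different points, and an \emph{amplitude} contribution $\bar\rho(Y_2)\,[E_1-E_2]$, due to the two exponential weights. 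I would bound the $\L1$ norm of each separately; well posedness and the representation formula are already granted, so only the estimates remain.

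For the transport term, the Lipschitz continuity of $\bar\rho$ gives $\modulo{\bar\rho(Y_1)-\bar\rho(Y_2)}\le\norma{\grad_x\bar\rho}_{\L\infty}\,\norma{X_1(\bar t;t,x)-X_2(\bar t;t,x)}$, and the last factor is controlled \emph{uniformly in $x$} by the ODE estimate~\eqref{eq:12} of Lemma~\ref{lem:ode} (with time running backward from $t$ to $\bar t$), which bounds it by $\norma{V_1-V_2}_{\L1(\langle\bar t,t\rangle;\L\infty)}\,e^{C\modulo{t-\bar t}}$. Since the integrand vanishes unless one of the backward characteristics reaches $\spt\bar\rho$, i.e.\ unless $x\in\spt\rho_1(t)\cup\spt\rho_2(t)$, its $x$-support has measure at most $\mathcal{L}^N\!\left(B(\spt\bar\rho,\,Ce^{C\modulo{t-\bar t}}\modulo{t-\bar t})\right)$ by the support estimate~\eqref{eq:11}; combined with the crude bound $E_1\le e^{C\modulo{t-\bar t}}$, this reproduces the first term of the claimed inequality.

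For the amplitude term, I would first use $\modulo{E_1-E_2}=\modulo{e^{A_1}-e^{A_2}}\le e^{C\modulo{t-\bar t}}\,\modulo{A_1-A_2}$, where $A_i$ is the exponent above. The difference of exponents is split as $\div_x V_1(\tau,X_1)-\div_x V_2(\tau,X_2)=\div_x(V_1-V_2)(\tau,X_1)+[\div_x V_2(\tau,X_1)-\div_x V_2(\tau,X_2)]$: the first piece is bounded by $\norma{\div_x(V_1-V_2)}_{\L\infty}$, while the second is handled by the Lipschitz continuity of $\div_x V_2$ in $x$ (controlled by $\norma{\grad_x\div_x V_2}_{\L\infty}\le C$) together with the same uniform bound on $\norma{X_1-X_2}$, producing the term $C\,\norma{V_1-V_2}_{\L1(\langle\bar t,t\rangle;\L\infty)}$. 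What remains is the $x$-integral $\int_{\reali^N}\modulo{\bar\rho(X_2(\bar t;t,x))}\,\d{x}$, which I would convert through the change of variables $y=X_2(\bar t;t,x)$; the corresponding Jacobian factor is exactly $E_2^{-1}\le e^{C\modulo{t-\bar t}}$, so this integral is at most $e^{C\modulo{t-\bar t}}\,\norma{\bar\rho}_{\L1}$. Collecting the exponential factors yields the $e^{2C\modulo{t-\bar t}}$ prefactors and the second term of the statement.

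The Gr\"onwall-type and Lipschitz estimates are routine; the step requiring the most care is the amplitude contribution. There one must track that the two weights $E_1,E_2$ integrate $\div_x V_i$ along \emph{two different} characteristics, so both the discrepancy $V_1-V_2$ and the displacement $X_1-X_2$ of the flows enter, and one must resist bounding $\int\modulo{\bar\rho\circ X_2}$ by a support-dependent quantity, instead recovering the clean factor $\norma{\bar\rho}_{\L1}$ via the Jacobian identity $E_2^{-1}$. Matching the precise constant $C$ in~\eqref{eq:13}, which bundles exactly $\norma{V_i}_{\L\infty}$, $\norma{D_x V_i}_{\L\infty}$ and $\norma{\grad_x\div_x V_i}_{\L\infty}$, the three quantities used above, is then a matter of bookkeeping.
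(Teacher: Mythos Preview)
Your proposal is correct and follows essentially the same route as the paper, which splits $\rho_1-\rho_2$ into three pieces (your amplitude term is split once more by inserting the intermediate exponential $\exp\bigl(-\int\div_x V_2(\tau,X_1)\,\d\tau\bigr)$) but uses exactly the same ingredients: the ODE stability~\eqref{eq:12}, the support bound~\eqref{eq:11}, Lipschitz continuity of $\bar\rho$ and of $\div_x V_2$, and the inequality $\modulo{e^a-e^b}\le e^{\max\{a,b\}}\modulo{a-b}$. Your explicit change of variables to pass from $\int_{\reali^N}\modulo{\bar\rho\circ X_2}\,\d x$ to $\norma{\bar\rho}_{\L1}$ via the Jacobian $E_2^{-1}$ is a step the paper leaves implicit.
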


\begin{proof} Using~\eqref{eq:9} and the triangle inequality, we have
  \begin{displaymath}
    \norma{\rho_1 (t) - \rho_2(t)}_{\L1 (\reali^N; \reali)}
    \leq
    (I) + (II) + (III)
  \end{displaymath}
  where
  \begin{eqnarray*}
    (I)
    &  =
    & \int_{\reali^N}
      \modulo{\bar\rho \left(X_1 (\bar t; t, x)\right) - \bar\rho \left(X_2 (\bar t; t, x)\right)}
      \exp\modulo{\int_{\bar t}^t \div_x V_1 \left(\tau, X_1 (\tau; t, x)\right) \d\tau}
      \d{x}
    \\
    (II)
    & =
    & \int_{\reali^N} \bar\rho\left(X_2 (\bar t; t, x)\right)
    \\
    & \times
    & \!\!\!
      \modulo{
      \exp\left[-\int_{\bar t}^t \div_x V_1\left(\tau, X_1 (\tau; t, x)\right) \d\tau\right]
      -
      \exp\left[-\int_{\bar t}^t \div_x V_2\left(\tau, X_1 (\tau; t, x)\right) \d\tau\right]
      }
      \d{x}
    \\
    (III)
    & =
    &  \int_{\reali^N} \bar\rho\left(X_2 (\bar t; t, x)\right)
    \\
    & \times
    & \!\!\!
      \modulo{
      \exp\left[-\int_{\bar t}^t \div_x V_2\left(\tau, X_1 (\tau; t, x)\right) \d\tau\right]
      -
      \exp\left[-\int_{\bar t}^t \div_x V_2\left(\tau, X_2 (\tau; t, x)\right) \d\tau\right]
      }
      \d{x}
  \end{eqnarray*}
  and we now bound the three terms separately. To estimate $(I)$,
  observe that by~\eqref{eq:11}
  \begin{displaymath}
    \bigcup_{i=1}^2 \spt \rho_i (t)
    \subseteq
    B\left(\spt \bar\rho, \max_{i=1,2}
      \norma{V_i}_{\L\infty (\langle\bar t, t\rangle \times \spt\bar\rho; \reali^N)}
      \exp \left(
        \norma{D_x V_i}_{\L\infty (\langle \bar t, t \rangle \times \reali^N; \reali^N)}
        \modulo{t-\bar t}
      \right)
      \modulo{t-\bar t}
    \right)
  \end{displaymath}
  and, using~\eqref{eq:12},
  \begin{eqnarray*}
    (I)
    &  =
    & \int_{\bigcup_{i=1}^2 \spt \rho_i (t)}
      \modulo{
      \bar\rho \left(X_1 (\bar t; t, x)\right)
      -
      \bar\rho \left(X_2 (\bar t; t, x)\right)}
      \exp\modulo{\int_{\bar t}^t \div_x V_1 \left(\tau, X_1 (\tau; t, x)\right) \d\tau}
      \d{x}
    \\
    & \leq
    & \int_{\bigcup_{i=1}^2 \spt \rho_i (t)}
      \norma{\grad_x \bar\rho}_{\L\infty (\reali^N; \reali^N)}
      \norma{X_1 (\bar t; t, x) - X_2 (\bar t; t, x)}
    \\
    &
    & \qquad\qquad\times
      \exp \left(\norma{D_x V_1}_{\L\infty (\langle\bar t, t\rangle \times \reali^N; \reali^N)} \modulo{t-\bar t}\right)
      \d{x}
    \\
    & \leq
    & \norma{\grad_x \bar\rho}_{\L\infty (\reali^N; \reali^N)}
    \\
    &
    & \times
      \mathcal{L}^N \!
      \left(
      \spt\bar\rho,
      \max_{i=1,2}
      \norma{V_i}_{\L\infty (\langle\bar t, t\rangle \times \spt\bar\rho; \reali^N)}
      \exp \left(
      \norma{D_x V_i}_{\L\infty (\langle \bar t, t \rangle \times \reali^N; \reali^{N\times N})}
      \modulo{t-\bar t}\right)
      \modulo{t-\bar t}\right)\;
    \\
    &
    & \quad \times
      \norma{V_1-V_2}_{\L1 (\langle \bar t, t\rangle; \L\infty (\reali^N; \reali))}
    \\
    &
    & \quad \times
      \exp \left(
      \left( \norma{D_x V_1}_{\L\infty ((\langle \bar t, t\rangle \times\reali^N; \reali^{N\times N})}
      +
      \norma{D_x V_2}_{\L\infty ((\langle \bar t, t\rangle \times\reali^N; \reali^{N\times N})}\right)
      \modulo{t-\bar t}
      \right) \,.
  \end{eqnarray*}
  Passing to the estimate of $(II)$, using the inequality
  $\modulo{e^a - e^b} \leq e^{\max\{a,b\}} \modulo{a-b}$,
  \begin{eqnarray*}
    &
    & \modulo{
      \exp\left(-\int_{\bar t}^t \div_x V_1\left(\tau, X_1 (\tau; t, x)\right) \d\tau\right)
      -
      \exp\left(-\int_{\bar t}^t \div_x V_2\left(\tau, X_1 (\tau; t, x)\right) \d\tau\right)
      }
    \\
    & \leq
    & \exp\left(
      \max_{i=1,2} \norma{D_x V_i}_{\L\infty (\langle \bar t, t \rangle \times \reali^N; \reali^{N\times N})} \,
      \modulo{t-\bar t}
      \right)
    \\
    &
    & \quad \times
      \modulo{
      \int_{\bar t}^t
      \modulo{
      \div_x V_2\left(\tau, X_1 (\tau; t, x)\right)
      -
      \div_x V_1\left(\tau, X_1 (\tau; t, x)\right)
      } \d\tau
      }
    \\
    & \leq
    & \exp\left(
      \max_{i=1,2}
      \norma{D_x V_i}_{\L\infty (\langle \bar t, t \rangle \times \reali^N; \reali^{N\times N})} \,
      \modulo{t-\bar t}
      \right) \;
      \norma{\div_x V_1 - \div_x V_2}_{\L\infty (\langle\bar t, t\rangle \times \reali^N; \reali)} \;
      \modulo{t-\bar t}
  \end{eqnarray*}
  so that
  \begin{eqnarray*}
    (II)
    & \leq
    & \exp\left(
      \max_{i=1,2}
      \norma{D_x V_i}_{\L\infty (\langle \bar t, t \rangle \times \reali^N; \reali^{N\times N})} \,
      \modulo{t-\bar t}
      \right) \;
      \norma{\div_x (V_1 - V_2)}_{\L\infty (\langle\bar t, t\rangle \times \reali^N; \reali)}
    \\
    &
    &\quad \times
      \norma{\bar\rho}_{\L1 (\reali^N; \reali)} \;
      \modulo{t-\bar t} \,.
  \end{eqnarray*}
  To bound $(III)$, use~\eqref{eq:12} and proceed similarly:
  \begin{eqnarray*}
    &
    & \modulo{
      \exp\left(
      -\int_{\bar t}^t \div_x V_2\left(\tau, X_1 (\tau; t, x)\right) \d\tau
      \right)
      -
      \exp\left(
      -\int_{\bar t}^t \div_x V_2\left(\tau, X_2 (\tau; t, x)\right) \d\tau
      \right)
      }
    \\
    & \leq
    & \exp\left(
      \norma{D_x V_2}_{\L\infty (\langle\bar t, t \rangle \times \reali^N; \reali^{N\times N})}
      \modulo{t-\bar t}\right)
    \\
    &
    & \quad \times
      \modulo{
      \int_{\bar t}^t
      \modulo{
      \div_x V_2\left(\tau, X_1 (\tau; t, x)\right)
      -
      \div_x V_2\left(\tau, X_2 (\tau; t, x)\right)
      }\d\tau
      }
    \\
    & \leq
    & \exp\left(
      2
      \norma{D_x V_2}_{\L\infty (\langle\bar t, t \rangle \times \reali^N; \reali^{N\times N})}
      \modulo{t-\bar t}\right) \;
      \norma{\grad_x \div_x V_2}_{\L\infty (\langle\bar t, t\rangle \times \reali^N; \reali^N)}
    \\
    &
    & \qquad \times
      \norma{V_1-V_2}_{\L1 (\langle \bar t, t\rangle; \L\infty (\reali^N; \reali))} \;
      \modulo{t-\bar t}
  \end{eqnarray*}
  so that
  \begin{eqnarray*}
    (III)
    & \leq
    & \exp\left(
      2\norma{D_x V_2}_{\L\infty (\langle\bar t, t \rangle \times \reali^N; \reali^{N\times N})}
      \modulo{t-\bar t}
      \right)
      \norma{\bar\rho}_{\L1 (\reali^N; \reali)} \;
      \norma{\grad_x \div_x V_2}_{\L\infty (\langle\bar t, t\rangle \times \reali^N; \reali^N)}
    \\
    &
    & \qquad \times
      \norma{V_1-V_2}_{\L1 (\langle \bar t, t\rangle; \L\infty (\reali^N; \reali))} \;
      \modulo{t-\bar t}
  \end{eqnarray*}
  Summing up the expressions obtained:
  \begin{eqnarray*}
    &
    & \norma{\rho_1 (t)-\rho_2 (t)}_{\L1 (\reali^N; \reali)}
    \\
    & \leq
    & \norma{\grad_x \bar\rho}_{\L\infty (\reali^N; \reali^N)} \;
      \norma{V_1-V_2}_{\L1 (\langle \bar t, t\rangle; \L\infty (\reali^N; \reali))}
    \\
    &
    & \quad \times
      \mathcal{L}^N\left(\spt\bar\rho, \max_{i=1,2}
      \norma{V_i}_{\L\infty (\langle\bar t, t\rangle \times \spt\bar\rho)}
      \exp \left(
      \norma{D_x V_i}_{\L\infty (\langle \bar t, t \rangle \times \reali^N)}
      \modulo{t-\bar t}\right)
      \modulo{t-\bar t}\right)
    \\
    &
    & \quad \times
      \exp \left(
      \left( \norma{D_x V_1}_{\L\infty ((\langle \bar t, t\rangle \times\reali^N; \reali^{N\times N})}
      +
      \norma{D_x V_2}_{\L\infty ((\langle \bar t, t\rangle \times\reali^N; \reali^{N\times N})}\right)
      \modulo{t-\bar t}
      \right)
    \\
    &
    & +
      \exp\left[
      \max_{i=1,2} \norma{D_x V_i}_{\L\infty (\langle \bar t, t \rangle \times \reali^N)} \modulo{t-\bar t}
      \right]
      \norma{\div_x (V_1 - V_2)}_{\L\infty (\langle\bar t, t\rangle \times \reali^N; \reali)}
      \norma{\bar\rho}_{\L1 (\reali^N; \reali)}
      \modulo{t-\bar t}
    \\
    &
    & +
      \exp\left(
      2\norma{D_x V_2}_{\L\infty (\langle\bar t, t \rangle \times \reali^N; \reali^{N\times N})}
      \modulo{t-\bar t}
      \right) \;
      \norma{\bar\rho}_{\L1 (\reali^N; \reali)} \;
      \norma{\grad_x \div_x V_2}_{\L\infty (\langle\bar t, t\rangle \times \reali^N; \reali^N)}
    \\
    &
    & \qquad \times
      \norma{V_1-V_2}_{\L1 (\langle \bar t, t\rangle; \L\infty (\reali^N; \reali))}\;
      \modulo{t-\bar t}
  \end{eqnarray*}
  Introduce $C$ as in~\eqref{eq:13}. Then,
  \begin{eqnarray*}
    &
    & \norma{\rho_1 (t)-\rho_2 (t)}_{\L1 (\reali^N; \reali)}
    \\
    & \leq
    & \norma{\grad_x \bar\rho}_{\L\infty (\reali^N; \reali^N)} \;
      \mathcal{L}^N \left(
      \spt \bar\rho, C e^{C \modulo{t-\bar t}}
      \modulo{t-\bar t}
      \right)
      e^{2C \modulo{t-\bar t}}
      \norma{V_1-V_2}_{\L1 (\langle \bar t, t\rangle; \L\infty (\reali^N; \reali))}
    \\
    &
    & +
      \norma{\bar\rho}_{\L1 (\reali^N; \reali)} \;
      e^{2C \modulo{t-\bar t}} \;
      \norma{\div_x (V_1 - V_2)}_{\L\infty (\langle\bar t, t\rangle \times \reali^N; \reali)} \;
      \modulo{t-\bar t}
    \\
    &
    & +
      \norma{\bar\rho}_{\L1 (\reali^N; \reali)} \;
      C\;
      e^{2C \modulo{t-\bar t}} \;
      \norma{V_1-V_2}_{\L1 (\langle \bar t, t\rangle; \L\infty (\reali^N; \reali))}\;
      \modulo{t-\bar t}
  \end{eqnarray*}
  completing the proof.
\end{proof}

\begin{proofof}{Proposition~\ref{prop:0}}
  The first statement follows from~Lemma~\ref{lem:CL1}.  Define
  $V_i (t,x) = v \left(t, x, P_i (t)\right)$, with
  $P_i (t) = \bar P +\int_0^t u_i (\tau) \d\tau$, for $i=1,2$. Then,
  direct computations yield:
  \begin{eqnarray*}
    \norma{V_i}_{\L\infty ([0,t]\times\reali^N; \reali^N)}
    & \leq
    & \norma{v}_{\L\infty ([0,t]\times\reali^N\times \reali^m; \reali^N)} \,.
    \\
    \norma{D_x V_i}_{\L\infty ([0,t]\times\reali^N; \reali^{N\times N})}
    & =
    & \norma{D_x v}_{\L\infty ([0,t]\times\reali^N\times\reali^m; \reali^{N\times N)}} \,.
    \\
    \norma{\grad_x \div_x V_i}_{\L\infty ([0,t]\times\reali^N; \reali^N)}
    & \leq
    & \norma{\grad_x \div_x v}_{\L\infty ([0,t]\times\reali^N\times\reali^m; \reali^N)} \,.
    \\
    \norma{V_1 - V_2}_{\L1 ([0,t]; \L\infty (\reali^N; \reali))}
    & =
    & \int_0^t \sup_{x\in \reali^N}
      \norma{
      v\left(\tau, x, P_1 (\tau)\right) - v\left(\tau, x, P_2 (\tau)\right)
      }
      \d\tau
    \\
    & \leq
    & \norma{D_P v}_{\L\infty ([0,t]\times\reali^N\times\reali^m; \reali^{N\times m})} \,
      t \, \norma{P_1 - P_2}_{\L\infty ([0,t]; \reali^m)} \,.
    \\
    \norma{\div_x (V_1 - V_2)}_{\L\infty ([0,t]\times\reali^N; \reali)}
    & \leq
    & \norma{\div_x v}_{\L\infty ([0,t]\times\reali^N\times\reali^m; \reali)} \,
      \norma{P_1 - P_2}_{\L\infty ([0,t]; \reali^N)} \,.
  \end{eqnarray*}
  Now, \eqref{eq:29} directly follows from~\eqref{eq:12} in
  Lemma~\ref{lem:ode}. To prove~\eqref{eq:34} use Lemma~\ref{lem:CL2}.
\end{proofof}

We recall here, without proof, the following result about G\^ateaux
and Fréchet differentiability for later use.

\begin{lemma}[{\cite[Lemma~1.15]{Schwartz}}]
  \label{lem:A2}
  Let $X,Y$ be Banach spaces, $A \subseteq X$ be open, $x_o \in A$ and
  $J \colon A \to Y$ be a map. Assume
  \begin{enumerate}
  \item $J$ is G\^ateaux differentiable at all $x \in A$ in all
    directions $v \in X$;
  \item the map $v \to D_vJ (x)$ is linear and continuous for all
    $x \in A$;
  \item
    $\lim\limits_{x \to x_o} \sup\limits_{\norma{v}_X = 1} \norma{D_v
      J (x) - D_v J (x_o)}_Y = 0$.
  \end{enumerate}
  \noindent Then, $J$ is Fréchet differentiable at $x_o$.
\end{lemma}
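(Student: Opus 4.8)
The plan is to exhibit the candidate Fréchet derivative explicitly and then control the remainder by a mean value inequality along line segments. By assumption~(2), the map $L \colon v \mapsto D_v J (x_o)$ is linear and continuous, hence a bounded linear operator from $X$ to $Y$; it is the only admissible candidate for the Fréchet differential, so the whole proof reduces to establishing
\[
  \lim_{h \to 0}
  \frac{\norma{J (x_o + h) - J (x_o) - L\,h}_Y}{\norma{h}_X}
  = 0 \,.
\]
Since $A$ is open, for $\norma{h}_X$ small enough the whole segment $\{x_o + t\,h : t \in [0,1]\}$ lies in $A$, so I may work along it.

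First I would fix such an $h$ and introduce the auxiliary curve $\psi \colon [0,1] \to Y$, $\psi (t) = J (x_o + t\,h) - t\, L\, h$. The G\^ateaux differentiability hypothesis~(1), applied at the point $x_o + t\, h$ in the direction $h$, says precisely that $t \mapsto J (x_o + t\, h)$ is differentiable with derivative $D_h J (x_o + t\, h)$; hence $\psi$ is differentiable on $[0,1]$, with $\psi' (t) = D_h J (x_o + t\, h) - L\, h = D_h J (x_o + t\, h) - D_h J (x_o)$, the last equality because $L\, h = D_h J (x_o)$. In particular $\psi$ is continuous on $[0,1]$ and differentiable on $]0,1[$.

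Next I would invoke the mean value inequality for Banach-space-valued functions of one real variable: if $\psi$ is continuous on $[0,1]$ and differentiable on $]0,1[$, then $\norma{\psi (1) - \psi (0)}_Y \le \sup_{t \in ]0,1[} \norma{\psi' (t)}_Y$. This is proved by choosing, via Hahn--Banach, a functional $y^\star \in Y^\star$ with $\norma{y^\star} = 1$ attaining $\norma{\psi (1) - \psi (0)}_Y$ on $\psi (1) - \psi (0)$ and applying the scalar mean value theorem to $t \mapsto y^\star (\psi (t))$. This yields
\[
  \norma{J (x_o + h) - J (x_o) - L\, h}_Y
  \le
  \sup_{t \in [0,1]} \norma{D_h J (x_o + t\, h) - D_h J (x_o)}_Y \,.
\]
Finally, using the positive homogeneity of $D_\cdot J (x)$ in its direction (a consequence of the linearity in~(2)), I would write $D_h J (x) = \norma{h}_X \, D_{h/\norma{h}_X} J (x)$ and bound the right-hand side by $\norma{h}_X \, \sup_{\norma{v}_X = 1} \sup_{t \in [0,1]} \norma{D_v J (x_o + t\, h) - D_v J (x_o)}_Y$. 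Dividing by $\norma{h}_X$, the difference quotient is thus controlled by the supremum over unit directions $v$ and over $t \in [0,1]$; since $\norma{(x_o + t\, h) - x_o}_X = t\, \norma{h}_X \le \norma{h}_X$ uniformly in $t$, hypothesis~(3) forces this quantity to $0$ as $h \to 0$, which is exactly the desired limit.

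The identification of $L$ and the homogeneity bookkeeping are routine; the step that genuinely needs care is the mean value inequality in the vector-valued setting, whose proof rests on Hahn--Banach, together with the observation that the \emph{uniform} (over $\norma{v}_X = 1$) convergence in~(3) is precisely what upgrades the pointwise-in-$t$ smallness to the uniform-in-$t$ smallness required to pass to the limit. I expect this uniformity interplay between~(2) and~(3) to be the main conceptual obstacle, the remainder being standard.
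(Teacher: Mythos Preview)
Your argument is correct and is the standard proof of this classical fact: identify the candidate derivative, apply the vector-valued mean value inequality along the segment $[x_o, x_o+h]$, and use the uniform-in-direction continuity~(3) to kill the remainder. Note that the paper itself does not prove this lemma; it is quoted without proof from \cite[Lemma~1.15]{Schwartz}, and your write-up matches the argument found there.
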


The next result describes the Fréchet differentiability of the
characteristic curves.
\begin{lemma}
  \label{lem:DX}
  Fix $t \in \left[0,T\right[$, $\Delta t \in \left]0, T-t \right]$
  and $x \in \reali^N$. If
  $v \in \C2 ([0,T] \times \reali^N \times \reali^N; \reali^N)$, then
  the map
  \begin{displaymath}
    \begin{array}[b]{ccccc}
      \mathcal{X}_{t,x}
      & \colon
      & \reali^N
      & \to
      & \C0 ([t, t+\Delta t]; \reali^N)
      \\
      &
      & w
      & \to
      & \mathcal{X}_{t,x} (w) \,,
    \end{array}
  \end{displaymath}
  defined so that
  $\tau \to \left(\mathcal{X}_{t,x} (w)\right) \! (\tau)$ solves the
  Cauchy problem
  \begin{equation}
    \label{eq:30}
    \left\{
      \begin{array}{@{}l@{\,}}
        \xi' = v \left(t, \xi, P (t) + (\tau-t)w\right)
        \\
        \xi (t) = x \,,
      \end{array}
    \right.
  \end{equation}
  is Fréchet differentiable in $\reali^N$. Moreover
  $\mathcal{X}_{t,x}$ has the Taylor expansion
  \begin{displaymath}
    \mathcal{X}_{t,x} (w + \delta_w) = \mathcal{X}_{t,x} (w) +
    D\mathcal{X}_{t,x} (w) \, \delta_w + o (\delta_w)
    \quad \mbox{in } \C0 \mbox{ as } \delta_w \to 0
  \end{displaymath}
  where $\tau \to \left(D\mathcal{X}_{t,x} (w)\right) (\tau)$ solves
  the linear first order $N \times N$ matrix differential
  equation
  \begin{equation}
    \label{eq:17}
    \left\{
      \begin{array}{l}
        Y'
        =
        D_x v\left(
        t,\mathcal{X}_{t,x} (w) (\tau), P (t) + (\tau-t)w
        \right)
        Y
        \vspace{.2cm}\\
        \qquad\qquad
        +
        (\tau - t) \,
        D_P v\left(
        t,
        \mathcal{X}_{t,x} (w) (\tau),
        P (t)+ (\tau-t) w
        \right)
        \vspace{.2cm}\\
        Y (t) = 0
      \end{array}
    \right.
  \end{equation}
  and the term $D\mathcal{X}_{t,x} (w)$ satisfies the expansion, as
  $\tau \to t$,
  \begin{equation}
    \label{eq:19}
    \left(D\mathcal{X}_{t,x} (w)\right) (\tau)
    =
    \dfrac{(\tau-t)^2}{2} \,
    D_P v\left(t, x, P (t)\right)  + o (\tau-t)^2 \,.
  \end{equation}
\end{lemma}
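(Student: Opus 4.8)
The plan is to obtain Fréchet differentiability through Lemma~\ref{lem:A2}, after identifying the Gâteaux derivative with the solution $Y$ of the linear problem~\eqref{eq:17}, and then to read off the expansion~\eqref{eq:19} directly from the integral form of~\eqref{eq:17}. First I would fix the setup. For each $w$ the map $\xi \to v\left(t,\xi, P (t)+(\tau-t)w\right)$ is Lipschitz in $\xi$ uniformly in $\tau$, since $v \in \C2$, so Lemma~\ref{lem:ode} guarantees that $\mathcal{X}_{t,x} (w)$ is well defined on $[t, t+\Delta t]$. Writing $\xi_w = \mathcal{X}_{t,x} (w)$, the coefficient $\tau \to D_x v\left(t, \xi_w (\tau), P (t)+(\tau-t)w\right)$ is continuous, hence the linear Cauchy problem~\eqref{eq:17} admits a unique solution $Y = Y (w) \in \C0 ([t, t+\Delta t]; \reali^{N\times N})$, and a Gr\"onwall estimate bounds $\norma{Y (w)}_{\C0}$ by a constant depending only on $\norma{D_x v}_{\L\infty}$, $\norma{D_P v}_{\L\infty}$ and $\Delta t$. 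This $Y (w)$ is my candidate derivative: I claim $D\mathcal{X}_{t,x} (w) \, \delta_w = Y (w) \, \delta_w$.

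The core step is the Gâteaux differentiability. Fixing $w$ and a direction $\delta_w$, I would set $\zeta_h = \xi_{w+h\delta_w} - \xi_w - h \, Y (w)\delta_w$, subtract the integral forms of~\eqref{eq:30} for $w+h\delta_w$ and for $w$ together with the integral form of~\eqref{eq:17}, and Taylor expand $v$ to first order in its $x$ and $P$ entries. The $\C2$ regularity makes $D_x v$ and $D_P v$ Lipschitz, so the remainders are quadratic in $\left(\xi_{w+h\delta_w}-\xi_w,\, h\delta_w\right)$; since $\norma{\xi_{w+h\delta_w}-\xi_w}_{\C0} = \O \, h$ by Lemma~\ref{lem:ode}, Gr\"onwall's lemma yields $\norma{\zeta_h}_{\C0} = o (h)$, uniformly in $\tau$. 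This proves that the Gâteaux derivative at $w$ in the direction $\delta_w$ equals $Y (w)\delta_w$. Linearity and boundedness of $\delta_w \to Y (w)\delta_w$ are immediate from the Gr\"onwall bound on $\norma{Y (w)}_{\C0}$, giving hypotheses~(1)--(2) of Lemma~\ref{lem:A2}.

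For hypothesis~(3) I would show that $w \to Y (w)$ is continuous into $\C0 ([t, t+\Delta t]; \reali^{N\times N})$: continuous dependence of $\xi_w$ on $w$ (Lemma~\ref{lem:ode}) together with continuity of $D_x v$ and $D_P v$ gives uniform convergence of the coefficients of~\eqref{eq:17}, and a final Gr\"onwall estimate transfers this to $Y (w) \to Y (w_0)$. Lemma~\ref{lem:A2} then yields Fréchet differentiability at every $w$, and $D\mathcal{X}_{t,x} (w) = Y (w)$. Finally, the expansion~\eqref{eq:19} follows by integrating~\eqref{eq:17} from $t$ to $\tau$. Since $Y (t) = 0$ and, from~\eqref{eq:17}, $Y' (t) = 0$, continuity of $Y'$ gives $Y (s) = o (s-t)$, so the term $\int_t^\tau D_x v (\cdots) \, Y (s) \, \d{s}$ is $o (\tau-t)^2$; in the remaining term $\int_t^\tau (s-t) \, D_P v\left(t, \xi_w (s), P (t)+(s-t)w\right) \d{s}$ I would use $\xi_w (s) \to x$ and continuity of $D_P v$ to replace the integrand by $(s-t)\, D_P v\left(t, x, P (t)\right)$ up to $o (s-t)$, obtaining $\frac{(\tau-t)^2}{2} D_P v\left(t, x, P (t)\right) + o (\tau-t)^2$.

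I expect the main obstacle to be the uniform $o (h)$ control of $\zeta_h$ in the Gâteaux step: it is here that the second order Taylor remainder of $v$ must be dominated, which is precisely why the hypothesis is $\C2$ rather than merely $\C1$, and where the Gr\"onwall argument must be arranged so that the estimate is uniform over the whole interval $[t, t+\Delta t]$.
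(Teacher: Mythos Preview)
Your proposal is correct and follows essentially the same route as the paper: both identify the candidate derivative as the solution $Y$ of~\eqref{eq:17}, establish the G\^ateaux derivative via a Gr\"onwall argument on the difference $\frac{\mathcal{X}(w+\epsilon\delta_w)-\mathcal{X}(w)}{\epsilon}-Y\delta_w$ (using the $\C2$ regularity to control Taylor remainders), and then invoke Lemma~\ref{lem:A2} for Fr\'echet differentiability. The only cosmetic difference is in~\eqref{eq:19}: the paper reads off $Y(t)=Y'(t)=0$, $Y''(t)=D_Pv(t,x,P(t))$ and applies Taylor, whereas you integrate~\eqref{eq:17} directly and split the two terms --- both arguments are equivalent.
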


\begin{proof}
  Since $t$ and $x$ are kept fixed throughout this proof, we write
  $\mathcal{X} (w)$ for $\mathcal{X}_{t,x} (w)$.  Recall that, for
  $\tau \in [t, t + \Delta t]$,
  \begin{displaymath}
    \mathcal{X} (w) (\tau)
    =
    x
    +
    \int_t^\tau
    v\left(
      t, \mathcal{X} (w) (s), P (t) + (s-t)w
    \right) \d{s} \,.
  \end{displaymath}
  Fix a direction $\delta_w \in \reali^N \setminus\{0\}$.  First we
  show the boundedness of the difference quotient
  \begin{equation*}
    \frac{\norma{\mathcal{X} (w+\epsilon\delta_w) (\tau)
        - \mathcal{X} (w) (\tau)}}{\epsilon}.
  \end{equation*}
  For $\tau \in [t , t + \Delta t]$, we have
  \begin{align*}
    &
      \frac{1}{\epsilon}
      \norma{\mathcal{X} (w+\epsilon\delta_w) (\tau) - \mathcal{X} (w) (\tau)}
    \\
    \le
    & \frac{1}{\epsilon}\int_t^\tau
      \norma{
      v\left(t,\mathcal{X} (w+\epsilon\delta_w) (s), P (t)
      + (s-t) (w+\epsilon\delta_w)\right)
      {-}
      v\left(t, \mathcal{X} (w) (s), P (t) + (s-t)w\right)
      } \! \d{s}
    \\
    \le
    & \frac{1}{\epsilon} \int_t^\tau
      \bigl\|
      v\left(t,\mathcal{X} (w+\epsilon\delta_w) (s), \!P (t)
      \!+ \! (s-t) (w+\epsilon\delta_w)\right)
      \!-\!
      v\left(t, \mathcal{X} (w) (s),\! P (t) \!+\!
      (s-t)(w+\epsilon\delta_w)\right)
      \bigr\|\! \d{s}
    \\
    & + \frac{1}{\epsilon}
      \int_t^\tau
      \norma{
      v\left(t,\mathcal{X} (w) (s), P (t) + (s-t) (w+\epsilon\delta_w)\right)
      -
      v\left(t, \mathcal{X} (w) (s), P (t) + (s-t)w\right)
      } \d{s}
    \\
    \le
    & \norma{v}_{\C1\left([0,T] \times \reali^N \times \reali^N; \reali^N\right)}
      \left[
      \int_t^\tau
      \frac{\norma{\mathcal{X} (w+\epsilon\delta_w) (s) - \mathcal{X} (w) (s)}}
      {\epsilon}
      \d{s}
      +
      \int_t^\tau (s-t) \norma{\delta_w} \d{s}
      \right]
    \\
    \le
    & \norma{v}_{\C1\left([0,T] \times \reali^N \times \reali^N; \reali^N\right)}
      \left[
      \int_t^\tau
      \frac{\norma{\mathcal{X} (w+\epsilon\delta_w) (s) - \mathcal{X} (w) (s)}}
      {\epsilon}
      \d{s}
      +
      \left(\Delta t\right)^2 \norma{\delta_w}
      \right].
  \end{align*}
  Hence an application of Gr\"onwall Lemma, see, e.g.,
  \cite[Chapter~3, Lemma~3.1]{BressanPiccoliBook} ensures that
  \begin{equation}
    \label{eq:bound_diff_quot}
    \frac{\norma{\mathcal{X} (w+\epsilon\delta_w) (\tau)
        - \mathcal{X} (w) (\tau)}}
    {\epsilon} \le K_1
    \left(\Delta t\right)^3 \norma{\delta_w}
    \exp\left(K_1 \Delta t\right),
  \end{equation}
  where
  $K_1 = \norma{v}_{\C1\left([0,T] \times \reali^N \times \reali^N;
      \reali^N\right)}$. Consequently
  \begin{equation}
    \label{eq:27}
    \lim_{\epsilon \to 0} \sup_{\tau \in [t, t+\Delta t]}
    \norma{\mathcal{X} (w+\epsilon\delta_w) (\tau) - \mathcal{X} (w) (\tau)}
    = 0 \,.
  \end{equation}
  We now prove the existence of directional derivatives of
  $\mathcal{X}$ along the direction
  $\delta_w \in \reali^N \setminus\{0\}$.  Calling $\tau \to Y (\tau)$
  the solution to the Cauchy problem~\eqref{eq:17}, we have
  \begin{eqnarray*}
    &
    & \dfrac{\mathcal{X} (w + \epsilon \delta_w) (\tau)
      -
      \mathcal{X} (w) (\tau)}{\epsilon}
      -
      Y (\tau) \; \delta_w
    \\
    & =
    & \! \dfrac{1}{\epsilon} \!
      \int_t^\tau \!\!
      \left[
      v \! \left(
      t,
      \mathcal{X} (w+\epsilon \delta_w) (s),
      P (t) + (s-t)(w+\epsilon \delta_w)
      \right)
      -
      v \! \left(t, \mathcal{X} (w) (s), P (t) + (s-t)w\right)
      \!\right] \!
      \d{s}
    \\
    &
    & -
      \int_t^\tau
      D_xv\left(t, \mathcal{X} (w) (s), P (t) + (s-t)w\right)
      Y (s) \d{s} \, \delta_w
    \\
    &
    & -
      \int_t^\tau
      (s-t)
      D_P v \left(t, \mathcal{X} (w) (s), P (t) + (s-t)w\right)
      \d{s} \; \delta_w
    \\
    & =
    & \dfrac{1}{\epsilon}
      \int_t^\tau
      \bigl[
      v \! \left(
      t,
      \mathcal{X} (w+\epsilon \delta_w) (s),
      P (t) + (s-t)(w+\epsilon \delta_w)
      \right)
    \\
    &
    & \qquad\qquad\qquad
      -
      v \! \left(t, \mathcal{X} (w) (s), P (t) + (s-t)(w+\delta_w)\right)
      \bigr]
      \d{s}
    \\
    &
    & -
      \int_t^\tau
      D_xv\left(t, \mathcal{X} (w) (s), P (t) + (s-t)w\right)
      Y (s) \d{s} \, \delta_w
    \\
    &
    & +
      \dfrac{1}{\epsilon}
      \int_t^\tau \!\!
      \left[
      v \! \left(
      t,
      \mathcal{X} (w) (s),
      P (t) + (s-t)(w+\epsilon \delta_w)
      \right)
      -
      v \! \left(t, \mathcal{X} (w) (s), P (t) + (s-t)w\right)
      \!\right] \!
      \d{s}
    \\
    &
    & -
      \int_t^\tau
      (s-t)
      D_P v \left(t, \mathcal{X} (w) (s), P (t) + (s-t)w\right)
      \d{s} \; \delta_w
    \\
    & =
    & \int_t^\tau \int_0^1
      \Bigl(
      D_x
      v\left(
      t,
      \theta \mathcal{X} (w+\epsilon \delta_w) (s)
      +
      (1-\theta) \mathcal{X} (w) (s),
      P (t) + (s-t)(w+\epsilon\delta_w)
      \right) \d\theta
    \\
    &
    & \qquad\qquad\qquad
      \times
      \dfrac{\mathcal{X} (w+\epsilon \delta_w) (s)
      -
      \mathcal{X} (w) (s)}{\epsilon}
      \Bigr)
      \d s
    \\
    &
    & -
      \int_t^\tau
      D_xv\left(t, \mathcal{X} (w) (s), P (t) + (s-t)w\right)
      Y (s) \d{s} \, \delta_w
    \\
    &
    & +
      \int_t^\tau (s-t)
      \Bigl(
      \int_0^1
      D_P
      v\left(
      t,
      \mathcal{X} (w) (s),
      P (t)+ (s-t) (w + (1-\theta)\epsilon\delta_w)
      \right) \d\theta
    \\
    &
    &
      \qquad\qquad\qquad\qquad
      -
      D_P v \left(t, \mathcal{X} (w) (s), P (t) + (s-t)w\right)
      \Bigr)
      \d{s} \; \delta_w
    \\
    & =
    & \int_t^\tau \int_0^1
      \Bigl(
      D_x
      v\left(
      t,
      \theta \mathcal{X} (w+\epsilon \delta_w) (s)
      +
      (1-\theta) \mathcal{X} (w) (s),
      P (t) + (s-t)(w+\epsilon\delta_w)
      \right) \d\theta
    \\
    &
    & \qquad\qquad\qquad
      \times
      \dfrac{\mathcal{X} (w+\epsilon \delta_w) (s)
      -
      \mathcal{X} (w) (s)}{\epsilon}
      \Bigr)
      \d s
    \\
    &&
       \mp \int_t^\tau
       D_x
       v\left(
       t,
       \mathcal{X} (w) (s),
       P (t) + (s-t)w
       \right)
       \dfrac{\mathcal{X} (w+\epsilon \delta_w) (s)
       -
       \mathcal{X} (w) (s)}{\epsilon}
       \d s
    \\
    &
    & -
      \int_t^\tau
      D_xv\left(t, \mathcal{X} (w) (s), P (t) + (s-t)w\right)
      Y (s) \d{s} \, \delta_w
    \\
    &
    & +
      \int_t^\tau (s-t)
      \Bigl(
      \int_0^1
      D_P
      v\left(
      t,
      \mathcal{X} (w) (s),
      P (t)+ (s-t) (w + (1-\theta)\epsilon\delta_w)
      \right) \d\theta
    \\
    &
    &
      \qquad\qquad\qquad\qquad
      -
      D_P v \left(t, \mathcal{X} (w) (s), P (t) + (s-t)w\right)
      \Bigr)
      \d{s} \; \delta_w
    \\
    & =
    & \int_t^\tau \int_0^1
      \Bigl(
      D_x
      v\left(
      t,
      \theta \mathcal{X} (w+\epsilon \delta_w) (s)
      +
      (1-\theta) \mathcal{X} (w) (s),
      P (t) + (s-t)(w+\epsilon\delta_w)
      \right) \d\theta
    \\
    &
    & \qquad\qquad\qquad
      -
      D_x v\left(t, \mathcal{X} (w) (s), P (t) + (s-t)w\right)
      \Bigr)
      \frac{      \mathcal{X} (w+\epsilon \delta_w) (s)
      -
      \mathcal{X} (w) (s)
      }{\epsilon}
      \d{s}
    \\
    &
    & +
      \int_t^\tau
      D_xv\left(t, \mathcal{X} (w) (s), P (t) + (s-t)w\right)
    \\
    &
    & \qquad\qquad\qquad
      \times
      \left(\dfrac{\mathcal{X} (w+\epsilon \delta_w) (s)
      -
      \mathcal{X} (w) (s)}{\epsilon}
      -
      Y (s) \delta_w \right)
      \d{s}
    \\
    &
    & +
      \int_t^\tau (s-t)
      \Bigl(
      \int_0^1
      D_P
      v\left(
      t,
      \mathcal{X} (w) (s),
      P (t)+ (s-t) (w + (1-\theta)\epsilon\delta_w)
      \right) \d\theta
    \\
    &
    &
      \qquad\qquad\qquad\qquad
      -
      D_P v \left(t, \mathcal{X} (w) (s), P (t) + (s-t)w\right)
      \Bigr)
      \d{s} \; \delta_w
  \end{eqnarray*}
  Calling $\O$ a constant dependent on the $\C2$ norm of $v$ and on
  the right hand side of~(\ref{eq:bound_diff_quot}), the above
  equality leads to
  \begin{eqnarray*}
    &
    & \norma{\dfrac{\mathcal{X} (w + \epsilon \delta_w) (\tau)
      -
      \mathcal{X} (w) (\tau)}{\epsilon}
      -
      Y (\tau) \; \delta_w}
    \\
    & \leq
    & \O +
      \int_t^\tau
      \O \, \norma{\dfrac{\mathcal{X} (w + \epsilon \delta_w) (\tau)
      -
      \mathcal{X} (w) (\tau)}{\epsilon}
      -
      Y(\tau) \; \delta_w} \d{s}
    \\
    &
    & +
      \int_t^\tau
      \O \, (s-t) \, \epsilon \d{s} \, \delta_w \,.
  \end{eqnarray*}
  Thanks to~\eqref{eq:27}, an application of Gr\"onwall Lemma proves
  the directional differentiability of $w \to \mathcal{X}(w)$ in the
  direction $\delta_w$.

  To prove the differentiability of $\mathcal{X}$, we are left to
  verify that~2.~and~3.~in Lemma~\ref{lem:A2} hold. The linearity of
  $\delta_w \to D\mathcal{X} (w) (\delta_w)$ is immediate, thanks to
  the homogeneous initial datum in~\eqref{eq:17}. The assumed $\C2$
  regularity of $v$ ensures the $\C1$ regularity of the right hand
  side in~\eqref{eq:17} and, hence, the boundedness of
  $\delta_w \to D\mathcal{X} (w) (\delta_w)$ (in the sense of linear
  operators), completing the proof of~2. Standard theorems on the
  continuous dependence of solutions to ordinary differential
  equations from parameters, see, e.g.,
  \cite[Theorem~4.2]{BressanPiccoliBook}, ensure that also~3.~in
  Lemma~\ref{lem:A2} holds, completing the proof of the
  differentiability of $\mathcal{X}$.

  The proof of the Taylor expansion~(\ref{eq:19}) follows easily
  using~(\ref{eq:17}). Indeed, by~(\ref{eq:17}), we deduce that
  $Y(t) = Y'(t) = 0$, while $Y''(t) = D_P v \left(t, x, P (t)\right)$,
  so that, if $\tau \in [t, t + \Delta t]$, then
  \begin{equation*}
    Y(\tau) = \frac{\left(\tau - t\right)^2}{2} D_P v \left(t, x, P (t)\right)
    + o \left(\left(\tau - t\right)^2\right).
  \end{equation*}
  This completes the proof of~(\ref{eq:19}) and of the lemma.
\end{proof}

\begin{proofof}{Lemma~\ref{lem:1}}
  The first statement is a direct consequence of Lemma~\ref{lem:CL1}.
  To prove~\eqref{eq:3}, we apply~Lemma~\ref{lem:CL2} with
  $V (\tau,x) = v (t,x,P (t) + (\tau - t)\, w)$ for
  $\tau \in [t, t+\Delta t]$:
  \begin{eqnarray*}
    \norma{V_i}_{\L\infty ([t, t+\Delta t] \times \reali^N; \reali^N)}
    & \leq
    & \norma{v}_{\L\infty ([t, t+\Delta t] \times \reali^N \times B (P (t), \Delta t \norma{w_i}); \reali^N)} \,,
    \\
    \norma{D_x V_i}_{\L\infty ([t, t+\Delta t] \times \reali^N; \reali^{N\times N})}
    & \leq
    & \norma{D_x v}_{\L\infty ([t, t+\Delta t] \times \reali^N \times B (P (t), \Delta t \norma{w_i}),\reali^{N\times N})} \,,
    \\
    \norma{V_1 - V_2}_{\L1 ([t. t+\Delta t], \L\infty(\reali^N; \reali^N))}
    & \leq
    & \frac12 \;
      \norma{D_P v}_{\L\infty ([t, t+\Delta t] \times \reali^N \times \reali^N; \reali^{N\times N})}
      (\Delta t)^2 \norma{w_1 - w_2} \,,
    \\
    \norma{\div_x (V_1 - V_2)}_{\L\infty ([t. t+\Delta t] \times \reali^N; \reali^N)}
    & \leq
    & \norma{D_P \div_x v}_{\L\infty ([t, t+\Delta t] \times \reali^N \times \reali^N; \reali^N)}
      (\Delta t) \, \norma{w_1 - w_2} \,.
  \end{eqnarray*}
  With the notation~\eqref{eq:14}, and assuming that $C\geq 1$,
  \begin{eqnarray*}
    &
    & \norma{\rho_1 (t+\Delta t) - \rho_2 (t+\Delta t)}_{\L1 (\reali^N; \reali)}
    \\
    & \leq
    & \norma{\grad_x \bar\rho}_{\L\infty (\reali^N; \reali^N)} \;
      \mathcal{L}^N \left(
      \spt \bar\rho, C e^{C \Delta t}
      \Delta t
      \right)
      e^{2C \Delta t}
      C (\Delta t)^2 \norma{w_1 - w_2}
    \\
    &
    & +
      \norma{\bar\rho}_{\L1 (\reali^N; \reali)} \; e^{2C \Delta t} \;
      C (\Delta t)^2 \norma{w_1 - w_2}    \\
    &
    & +
      \norma{\bar\rho}_{\L1 (\reali^N; \reali)} \;
      C^2 \,
      e^{2C \Delta t} \;
      (\Delta t)^3 \norma{w_1 - w_2}
    \\
    & \leq
    & \left(
      \norma{\grad_x \bar\rho}_{\L\infty (\reali^N; \reali^N)} \;
      \mathcal{L}^N \left(
      \spt \bar\rho, C e^{C \Delta t}
      \Delta t
      \right)
      +
      (1+C\,\Delta t) \norma{\bar\rho}_{\L1 (\reali^N; \reali)}
      \right)
    \\
    &
    & \quad
      \times
      C e^{2C \Delta t} \, (\Delta t)^2 \, \norma{w_1 - w_2}
  \end{eqnarray*}
  completing the proof.
\end{proofof}

\begin{proofof}{Proposition~\ref{prop:1}}
  The map $\mathcal{J}_{t, \Delta t}$ is well defined by
  Lemma~\ref{lem:1}.  To prove its Lipschitz continuity, let
  $w_1, w_2 \in \reali^N$.  Denote
  $V_i (\tau, x) = v (t, x, P (t) + (\tau-t)w_i)$; $X_i = X_{t,w_i}$
  the solution to~\eqref{eq:10} and $\rho_i = \rho_{w_i}$ the
  corresponding solution to~\eqref{eq:6}.  Straightforward
  computations yield
  \begin{eqnarray*}
    \modulo{\mathcal{J}_{t, \Delta t} (w_1) - \mathcal{J}_{t, \Delta t} (w_2)}
    & \leq
    &
      \int_{\reali^n}
      \modulo{\rho_1 (t + \Delta t, x) - \rho_2 (t+\Delta t, x)}
      \modulo{\psi (x)} \, \d{x}
    \\
    & \leq
    & \norma{\rho_1 (t+\Delta t) - \rho_2 (t+\Delta t)}_{\L1 (\reali^N; \reali)} \;
      \norma{\psi}_{\L\infty (\reali^N; \reali)} \,.
  \end{eqnarray*}
  and the proof is completed thanks to~\eqref{eq:3}.
\end{proofof}

\begin{proofof}{Theorem~\ref{thm:bo}}
  Recall~\eqref{eq:7}--\eqref{eq:28}. Fix $t$ and $t+\Delta t$ in
  $[0,T]$. The solution $\tau \to \mathcal{X}_w (\tau;t+\Delta t, x)$
  to
  \begin{equation}
    \label{eq:31}
    \left\{
      \begin{array}{@{}l@{\,}}
        \xi' = v \left(t, \xi, P (t) + (\tau-t)w\right)
        \\
        \xi (t+\Delta t) = x
      \end{array}
    \right.
    \qquad \tau \in [t, t+\Delta t]
  \end{equation}
  will be shortened to $\tau \to \mathcal{X}_w (\tau;x)$.  By
  Lemma~\ref{lem:DX}, we have the expansion
  \begin{equation}
    \label{eq:16}
    \mathcal{X}_{w+\epsilon\delta_w} (\tau;x)
    =
    \mathcal{X}_w (\tau;x)
    +
    \epsilon \, D_w \mathcal{X}_w (\tau;x) \, \delta_w + o (\epsilon)
    \quad \mbox{ in } \C0 \mbox{ as } \epsilon \to 0 \,,
  \end{equation}
  where $\tau \to D_w \mathcal{X}_w (\tau;t+\Delta t, x)$, or
  $\tau \to D_w \mathcal{X}_w (\tau;x)$ for short, solves the Cauchy
  Problem
  \begin{equation}
    \label{eq:32}
    \!\!\!\!\!\!\!
    \left\{
      \begin{array}{@{}l@{\!}}
        Y'
        =
        D_x v \! \left(
        t, \mathcal{X}_w (\tau;x), P (t) + (\tau-t)w
        \right)
        Y
        +
        (\tau - t)
        D_P v \! \left(
        t,
        \mathcal{X}_w (\tau;x),
        P (t)+ (\tau-t) w
        \right)
        \\
        Y (t+\Delta t) = 0
      \end{array}
    \right.
  \end{equation}
  for $\tau \in [t, t+\Delta t]$.  With reference to~\eqref{eq:33},
  denote for simplicity $\mathcal{J} = \mathcal{J}_{t, \Delta t}$,
  $\psi = \psi$ and compute:
  \begin{eqnarray*}
    &
    & \dfrac{1}{\epsilon}
      \left(\mathcal{J} (w+\epsilon\delta_w) - \mathcal{J} (w)\right)
    \\
    & =
    & \dfrac{1}{\epsilon}
      \int_{\reali^N}
      \left(
      \rho_{w+\epsilon\delta_w} (t+\Delta t, x) - \rho_w (t+\Delta t, x)
      \right)
      \psi (x)
      \d{x}
    \\
    & =
    & \dfrac{1}{\epsilon}
      \int_{\reali^N}
      \Big[
      \rho \left(t, \mathcal{X}_{w+\epsilon\delta_w} (t;x)\right)
    \\
    &
    & \qquad \qquad \times
      \exp \left(
      -\int_t^{t+\Delta t}
      \div_x v
      \left(
      s, \mathcal{X}_{w+\epsilon\delta_w}(s;x), P (t) + (s-t)(w+\epsilon\delta_w)
      \right) \d{s}
      \right)
    \\
    &
    & \qquad
      -
      \rho \left(t, \mathcal{X}_w (t;x)\right)
    \\
    &
    & \qquad \qquad \times
      \exp \left(
      -\int_t^{t+\Delta t}
      \div_x  v\left(s, \mathcal{X}_w(s;x), P (t) + (s-t)w\right) \d{s}
      \right)
      \Big]
      \psi (x)
      \d{x}
    \\
    & =
    & (I) + (II) + (III)
  \end{eqnarray*}
  where
  \begin{eqnarray*}
    (I)
    & =
    & \dfrac{1}{\epsilon}
      \int_{\reali^N}
      \left[
      \rho \left(t, \mathcal{X}_{w+\epsilon\delta_w} (t;x)\right)
      -
      \rho \left(t, \mathcal{X}_w (t;x)\right)
      \right]
    \\
    &
    & \qquad \times
      \exp \! \left( \!
      -\int_t^{t+\Delta t}
      \div_x v
      \left(
      s, \mathcal{X}_{w+\epsilon\delta_w}(s;x), P (t) + (s-t)(w+\epsilon\delta_w)
      \right) \d{s}
      \right) \! \psi (x) \d{x}
    \\
    (II)
    & =
    & \dfrac{1}{\epsilon}
      \int_{\reali^N}
      \rho \left(t, \mathcal{X}_w (t;x)\right)
    \\
    &
    & \qquad \times
      \Big[
      \exp \left(
      -\int_t^{t+\Delta t}
      \div_x v
      \left(
      s, \mathcal{X}_{w+\epsilon\delta_w}(s;x), P (t) + (s-t)(w+\epsilon\delta_w)
      \right) \d{s}
      \right)
    \\
    &
    & \qquad\qquad-
      \exp \left(
      -\int_t^{t+\Delta t}
      \div_x v\left(s, \mathcal{X}_{w+\epsilon\delta_w}(s;x), P (t) + (s-t)w\right) \d{s}
      \right)
      \Big]
      \psi (x)
      \d{x}
    \\
    (III)
    & =
    & \dfrac{1}{\epsilon}
      \int_{\reali^N}
      \rho \left(t, \mathcal{X}_w (t;x)\right)
    \\
    &
    & \qquad \times
      \Big[
      \exp \left(
      -\int_t^{t+\Delta t}
      \div_x v
      \left(
      s, \mathcal{X}_{w+\epsilon\delta_w}(s;x), P (t) + (s-t) w
      \right) \d{s}
      \right)
    \\
    &
    & \qquad\qquad-
      \exp \left(
      -\int_t^{t+\Delta t}
      \div_x v\left(s, \mathcal{X}_w(s;x), P (t) + (s-t)w\right) \d{s}
      \right)
      \Big]
      \psi (x)
      \d{x}
  \end{eqnarray*}
  The following estimate uses $D_w\mathcal{X}_w$ as defined
  in~\eqref{eq:32} and is of use to compute~$(I)$:
  \begin{eqnarray*}
    &
    & \dfrac{1}{\epsilon}
      \left(
      \rho \left(t, \mathcal{X}_{w+\epsilon\delta_w} (t;x)\right)
      -
      \rho \left(t, \mathcal{X}_w (t;x)\right)
      \right)
    \\
    & =
    & \int_0^1
      \grad_x \rho \left(
      t,
      \theta \mathcal{X}_{w+\epsilon\delta_w} (t;x)
      +
      (1-\theta) \mathcal{X}_w (t;x)
      \right)
      \d\theta \;
      \dfrac{
      \mathcal{X}_{w+\epsilon\delta_w} (t;x)
      -
      \mathcal{X}_w (t;x)}{\epsilon}
    \\
    & \stackrel{\epsilon\to 0}{\to}
    & \grad_x \rho \left(t, \mathcal{X}_w (t;x) \right)\;
      D_w \mathcal{X}_w (t;x) \; \delta_w \,,
  \end{eqnarray*}
  so that,
  \begin{eqnarray*}
    (I)
    & \stackrel{\epsilon\to 0}{\to}
    & \int_{\reali^N}
      \grad_x \rho \left(t, \mathcal{X}_w (t;x)\right)\;
      D_w \mathcal{X}_w (t;x) \, \delta_w
    \\
    &
    & \qquad\qquad
      \times \exp \left(
      -\int_t^{t+\Delta t}
      \div_x v\left(s, \mathcal{X}_w(s;x), P (t) + (s-t)w\right) \d{s}
      \right)
      \psi (x)
      \d{x}
  \end{eqnarray*}
  while
  \begin{eqnarray*}
    (II)
    & \stackrel{\epsilon\to 0}{\to}
    & -
      \int_{\reali^N}
      \rho\left(t, \mathcal{X}_w (t;x)\right)
          \, \exp\left(
          -\int_t^{t+\Delta t}
          \div_x v\left(
          s,
          \mathcal{X}_w (s;x),
          P (t) + (s-t)w
          \right)
          \d{s}
          \right)
    \\
    &
    & \qquad \times
      \int_t^{t+\Delta t}
      \grad_P \div_x v\left(
      s,
      \mathcal{X}_w (s;x),
      P (t) + (s-t) w
      \right)
      (s-t)
      \d{s}
      \delta_w
      \psi (x)
      \d{x}
  \end{eqnarray*}
  and similarly, using $D_w \mathcal{X}_w$ defined as solution
  to~\eqref{eq:32},
  \begin{eqnarray*}
    (III)
    & \stackrel{\epsilon\to 0}{\to}
    & -\int_{\reali^n}
      \rho\left(t, \mathcal{X}_w (s;x)\right)
          \, \exp\left(
          -\int_t^{t+\Delta t}
          \div_x v\left(
          s,
          \mathcal{X}_w (s;x),
          P (t) + (s-t)w
          \right)
          \d{s}
          \right)
    \\
    &
    & \quad \times
      \int_t^{t+\Delta t}
      \grad_x \div_x v\left(
      s,
      \mathcal{X}_w (s;x),
      P (t) + (s-t) w
      \right)
      D_w \mathcal{X}_w(s;x)
      \d{s}
      \psi (x)
      \d{x} \, \delta_w
  \end{eqnarray*}
  Adding the three terms we get:
  \begin{eqnarray*}
    &
    & \lim_{\epsilon \to 0}
      \dfrac{1}{\epsilon} \,
      \left(\mathcal{J} (w+\epsilon\delta_w) - \mathcal{J} (w)\right)
    \\
    & =
    & \int_{\reali^N}
      \Big[
      \grad_x \rho \left(t, \mathcal{X}_w (t;x)\right)\;
      D_w \mathcal{X}_w(t;x)
    \\
    &
    & \qquad \quad-
      \rho\left(t, \mathcal{X}_w (t;x)\right)
          \int_t^{t+\Delta t}
          \Big(
          \grad_P \div_x v\left(
          s,
          \mathcal{X}_w (s;x),
          P (t) + (s-t) w
          \right)
          (s-t)
    \\
    &
    & \qquad\qquad\qquad
      +
      \grad_x \div_x v\left(
      s,
      \mathcal{X}_w (s;x),
      P (t) + (s-t) w
      \right)
      D_w \mathcal{X}_w (s;x)
      \Big)
      \d{s}
      \Big]
    \\
    &
    & \quad\times
      \exp\left(
      -\int_t^{t+\Delta t}
      \div_x v\left(
      s,
      \mathcal{X}_w (s;x),
      P (t) + (s-t)w
      \right)
      \d{s}
      \right)
      \psi (x)
      \d{x} \, \delta_w \,.
  \end{eqnarray*}
  To compute the limit as $\Delta t \to 0$ of the expression above,
  recall that as $\Delta t \to 0$,
  \begin{displaymath}
    \begin{array}{rcl@{\qquad}l}
      \mathcal{X}_w (t; t+\Delta t, x)
      & =
      & x - v\left(t,x,P (t)\right) \, \Delta t + o (\Delta t)
      & [\mbox{by~\eqref{eq:31}}]
      \\
      D_w \mathcal{X}_w (t; t+\Delta t, x)
      & =
      & \frac12 \, D_P v \left(t, x, P (t)\right)
        \, (\Delta t)^2
        + o (\Delta t)^2
      & [\mbox{by~\eqref{eq:19}]}
    \end{array}
  \end{displaymath}
  so that
  \begin{eqnarray*}
    &
    & \lim_{\epsilon \to 0}
      \dfrac{1}{\epsilon} \,
      \left(\mathcal{J} (w+\epsilon\delta_w) - \mathcal{J} (w)\right)
    \\
    & =
    & \dfrac{(\Delta t)^2}{2}
      \int_{\reali^N}
      \left(
      \grad_x \rho (t, x)\,
      D_P v \left(t,x, P (t)\right)
      -
      \rho (t, x) \,
      \grad_P \div_x v \left(s, x, P (t)\right)
      \right)
      \psi (x) \,
      \d{x} \,
      \delta_w
    \\
    &
    & +
      o (\Delta t)^2
  \end{eqnarray*}
  completing the proof.
\end{proofof}

\smallskip

\noindent\textbf{Acknowledgments:} Part of this work was supported by the  PRIN~2015 project \emph{Hyperbolic Systems of Conservation Laws
  and Fluid Dynamics: Analysis and Applications} and by the
GNAMPA~2017 project \emph{Conservation Laws: from Theory to
  Technology}. The \emph{IBM Power Systems Academic Initiative}
substantially contributed to the the numerical integrations.

{\small

  \bibliographystyle{abbrv}


}

\end{document}